\newcommand{\GA}{{\rm GA}}
\newcommand{\EA}{{\rm EA}}
\newcommand{\BA}{{\rm BA}}
\newcommand{\TA}{{\rm TA}}
\newcommand{\SA}{{\rm SA}}
\newcommand{\PA}{{\rm PA}}
\newcommand{\GL}{{\rm GL}}
\newcommand{\GLIN}{{\rm GLIN}}
\newcommand{\SLIN}{{\rm SLIN}}
\newcommand{\Af}{{\rm Aff}}
\newcommand{\Tr}{{\rm Tr}}
\newcommand{\SL}{{\rm SL}}
\newcommand{\D}{{\rm D}}
\newcommand{\Df}{{\rm Df}}
\DeclareMathOperator{\Spec}{Spec}
\DeclareMathOperator{\id}{id}
\newcommand{\A}{\mathbb{A}}
\newcommand{\IC}{\mathbb{C}}
\newcommand{\IF}{\mathbb{F}}
\newcommand{\IN}{\mathbb{N}}
\newcommand{\Ik}{\mathbb{K}}
\newcommand{\K}{\mathbb{K}}
\DeclareMathOperator{\vd}{vdeg}
\newcommand{\llex}{<_{\rm lex}}
\newtheorem{theorem}{Theorem}
\newtheorem{lemma}[theorem]{Lemma}
\newtheorem{claim}[theorem]{Claim}
\newtheorem{corollary}[theorem]{Corollary}
\newtheorem{conjecture}{Conjecture}
\newtheorem{question}[conjecture]{Question}
\newtheorem{problem}[conjecture]{Problem}
\theoremstyle{definition}
\newtheorem{definition}{Definition}
\theoremstyle{remark}
\newtheorem{remark}{Remark}
\newtheorem{example}{Example}
\newcommand{\cotame}{co-tame\xspace}
\title{Normal subgroups generated by a single polynomial automorphism}
\author{%
Drew Lewis\thanks{Department of Mathematics and Statistics,  University of South Alabama.  Email address: \texttt{drewlewis@southalabama.edu}}%
}
\begin{document}
\maketitle

\begin{abstract}
We study criteria for deciding when the normal subgroup generated by a single polynomial automorphism of $\A^n$ is as large as possible, namely equal to the normal closure of the special linear group in the special automorphism group.  In particular, we investigate {\em $m$-triangular automorphisms}, i.e. those that can be expressed as a product of affine automorphisms and  $m$ triangular automorphisms.  Over a field of characteristic zero, we show that every nontrivial $4$-triangular special automorphism generates the entire normal closure of the special linear group in the special tame subgroup, for any dimension $n \geq 2$.  This generalizes a result of Furter and Lamy in dimension 2.

\end{abstract} 

\section{Introduction}
Let $\Ik$ be a field.  One of the fundamental problems in affine algebraic geometry is to try to describe the structure of $\GA_n(\Ik)$, the group of polynomial automorphisms of  $\A^n$.  There are a few natural subgroups:
\begin{itemize}
\item The general linear group $\GL_n(\Ik)$;
\item The affine group $\Af_n(\Ik)$ consisting of automorphisms of degree one;
\item The triangular subgroup $\BA_n(\Ik)$;
\item The subgroup $\EA_n(\Ik)$ generated by elementary automorphisms, i.e. those with unital Jacobian determinant fixing $n-1$ variables;
\item The tame subgroup $\TA_n(\Ik)$ generated by the triangular and affine automorphisms;
\item The special automorphism group $\SA_n(\Ik)$, consisting of automorphisms with unital Jacobian determinant.
\end{itemize}

It is a classical result of Jung and van der Kulk \cite{Jung,vanderKulk} that in dimension two, the tame subgroup is the entire automorphism group, while Shestakov and Umirbaev \cite{Shestakov-Umirbaev} famously showed that this does not hold in dimension three (in characteristic zero); this question, known as the tame generators problem, remains open in higher dimensions.  

A natural area of inquiry is to describe subgroups lying between the affine and the tame subgroup.  In dimension two, there are many such subgroups due to the classical result that $\TA_2(\Ik)$ is an amalgamated free product of $\Af_2(\Ik)$ and $\BA_2(\Ik)$ over their intersection, but in higher dimensions (and characteristic zero; see \cite{Edo-Kuroda} for the positive characteristic case) this is a surprisingly delicate question.  It was not until recently that Edo and the author \cite{Edo-Lewis} gave the first example of such an intermediate subgroup in characteristic zero.  The idea there was to study {\em co-tame automorphisms}, defined by Edo \cite{Edo} as those that together with the affine group generate the entire tame subgroup; the example of \cite{Edo-Lewis} is an automorphism that is tame but not co-tame, which therefore generates a proper intermediate subgroup between $\Af_n(\Ik)$ and $\TA_n(\Ik)$.  Interestingly, Edo \cite{Edo} showed that certain wild maps, including the Nagata map, are co-tame.

One key difficulty in describing this subgroup lattice between the affine and tame subgroups arises from the fact that many simply constructed automorphisms are co-tame.  To describe this difficulty further, let us make a precise definition.

\begin{definition}
A tame automorphism $\phi$ is called {\em $m$-triangular} if it can be written in the form $\phi = \alpha _0 \tau _1 \alpha _1 \cdots \tau _m \alpha _{m}$ for some $\tau _i \in \BA_n(\Ik)$ and $\alpha _i \in \Af_n(\Ik)$.
\end{definition}

The author and Edo \cite{Edo-Lewis17} recently showed that, for $n \geq 3$, all 3-triangular automorphisms are co-tame, while in the $n=3$ case, for all $m \geq 4$ there exist $m$-triangular automorphisms that are not co-tame (and thus generate proper intermediate subgroups between the affine and tame subgroups). 

This phenomenon of single automorphisms generating large subgroups also appears in the work of Furter and Lamy \cite{Furter-Lamy}, who were studying normal subgroups in dimension two with an eye towards establishing the non-simplicity of the two-dimensional Cremona group (later proved over an algebraically closed field by Cantat and Lamy \cite{CantatLamy}).  To be more precise, let us quickly fix some notations.

\begin{itemize}
\item If $H \subset \SA_n(\Ik)$, we use $\langle H \rangle^S$ to denote the normal subgroup generated by $H$ in $\SA_n(\Ik)$.
\item If $H \subset \GA_n(\Ik)$, we use $\langle H \rangle^G$ to denote the normal subgroup generated by $H$ in $\GA_n(\Ik)$.
\item The group $\SLIN_n(\Ik):=\langle \SL_n (\Ik) \rangle ^S$ is the smallest normal subgroup of $\SA_n(\Ik)$ that contains $\SL_n(\Ik)$.
\item The group $\GLIN_n(\Ik):= \langle \GL_n(\Ik) \rangle ^G$ is the smallest normal subgroup of $\GA_n(\Ik)$ that contains $\GL_n(\Ik)$.
\end{itemize}

Danilov \cite{Danilov} showed that $\SA_2(\Ik)$ (for a field of characteristic zero) is not simple by constructing a $13$-triangular map that generates a proper normal subgroup.   Furter and Lamy \cite{Furter-Lamy} showed that the normal subgroup generated by any single nontrivial $4$-triangular automorphism in $\SA_2(\Ik)$ is the entire group $\SA_2(\Ik)$.  Moreover, by taking advantage of the amalgamated free product structure of $\GA_2(\Ik)$, they showed that for $m \geq 7$, generic $m$-triangular automorphisms generate proper normal subgroups.  More recently, the non-simplicity of $\SA_2(\Ik)$ was shown for all fields by Minasyan and Osin \cite{MinasyanOsin}.

In dimension 3 (and characteristic zero), while $\TA_3(\Ik)$ is a proper subgroup of $\GA_3(\Ik)$ \cite{Shestakov-Umirbaev}, the tame subgroup is still an amalgamated free product \cite{Wright} (of three subgroups along their pairwise intersections).  Recently Lamy and Przytycki \cite{LamyPrzytycki} took advantage of this to give a class of examples of $m$-triangular automorphisms $$\phi _m = (x_2,x_1+x_2x_3,x_3)^m(x_3,x_1,x_2)$$ such that $\langle \phi _m \rangle ^{\SA_3(\Ik) \cap \TA_3(\Ik)}$ is a proper subgroup of $\SA_3(\Ik) \cap \TA_3(\Ik)$ for every even $m \geq 12$; moreover, they showed that $\TA_3(\Ik)$ is acylindrically hyperbolic.   However, it remains to our knowledge an open question whether $\langle \phi _m \rangle ^S =\SLIN_3(\Ik)$.

The group $\GLIN_n(\Ik)$ was introduced by Maubach and Poloni \cite{Maubach-Poloni}, who were investigating a weaker form of Meister's Linearization problem\footnotemark:
\begin{problem}\label{prob:Meister}
For which $\phi \in \GA_n(\IC)$ do there exist some $s \in \IC^*$ such that $(sx_1,\ldots,sx_n)\phi$ is conjugate to an element of $\GL_n(\IC)$?
\end{problem}

\footnotetext{We feel obliged to point the reader to Section 8.3 of \cite{ArnoBook}, in which van den Essen gives a delightful accounting of the story of the construction of counterexamples to Meister's original Linearization Conjecture and the related Markus-Yamabe Conjecture.}

While van den Essen \cite{vandenEssen} gave an example of an automorphism that does not have this property (see Example \ref{ex:vdE}), Maubach and Poloni showed that the (wild) Nagata map does have this property, and thus lies in $\GLIN_n(\IC)$.  This led them to make the following conjecture.

\begin{conjecture}\label{con:g}
If $\Ik \neq \IF_2$, then $\GLIN_n(\Ik) = \GA_n(\Ik)$.
\end{conjecture}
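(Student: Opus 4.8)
The plan is to descend the subgroup lattice, reducing the conjecture first to special automorphisms and then to tame ones. The Jacobian determinant is a homomorphism $\GA_n(\Ik)\to\Ik^*$ with kernel $\SA_n(\Ik)$, and $\GLIN_n(\Ik)\supseteq\GL_n(\Ik)$ surjects onto $\Ik^*$ via $\det$; hence $\GLIN_n(\Ik)\cdot\SA_n(\Ik)=\GA_n(\Ik)$, so $\GLIN_n(\Ik)=\GA_n(\Ik)$ if and only if $\SA_n(\Ik)\subseteq\GLIN_n(\Ik)$. It therefore suffices to show that every special automorphism lies in $\GLIN_n(\Ik)$.

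The second step is $\Af_n(\Ik)\subseteq\GLIN_n(\Ik)$, where the only content is the translations, and this is exactly where the hypothesis $\Ik\neq\IF_2$ is used: pick $\lambda\in\Ik^*$ with $\lambda\neq1$ and let $h$ be the homothety $x\mapsto\lambda x$; then for any translation $t_v$ one computes $h^{-1}\,(t_v\,h\,t_v^{-1})=t_{(\lambda^{-1}-1)v}$, with both factors on the left lying in $\GLIN_n(\Ik)$ --- one in $\GL_n(\Ik)$, the other a $\GA_n(\Ik)$-conjugate of $h\in\GL_n(\Ik)$ --- so $t_{(\lambda^{-1}-1)v}\in\GLIN_n(\Ik)$, and letting $v$ vary yields all of $\Af_n(\Ik)$.

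The main work is $\TA_n(\Ik)\subseteq\GLIN_n(\Ik)$. Since $\TA_n(\Ik)$ is generated by $\Af_n(\Ik)$ and the elementary automorphisms, and each elementary automorphism is $\GL_n(\Ik)$-conjugate to one of the form $e_f=(x_1+f(x_2,\dots,x_n),x_2,\dots,x_n)$, it is enough to show $e_f\in\GLIN_n(\Ik)$ for every polynomial $f$. The set $\{f:e_f\in\GLIN_n(\Ik)\}$ is an additive subgroup of $\Ik[x_2,\dots,x_n]$, invariant under permutations of $x_2,\dots,x_n$ and under substitutions $f\mapsto f\circ\tau$ for $\tau\in\GA_{n-1}(\Ik)$ in those variables, and it contains all linear forms (since those $e_f$ lie in $\GL_n(\Ik)$). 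The crucial move is to conjugate the linear shear $e_{x_2}\in\GL_n(\Ik)$ by $\psi=(x_1,x_2+g(x_3,\dots,x_n),x_3,\dots,x_n)$: a one-line computation gives $\psi\,e_{x_2}\,\psi^{-1}\cdot e_{x_2}^{-1}=e_{-g}$, so $e_g\in\GLIN_n(\Ik)$ for every $g\in\Ik[x_3,\dots,x_n]$; permuting coordinates then produces $e_h$ for every $h$ that omits at least one variable, and an induction on degree --- conjugating by maps $(x_1,\dots,x_j+p(x_k),\dots)$ and subtracting off the summands already known to lie in $\GLIN_n(\Ik)$ --- reaches all $f$. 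In characteristic zero this goes through cleanly; in positive characteristic one must argue more carefully with binomial coefficients, using the scalars $\neq1$ available once $\Ik\neq\IF_2$.

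For $n=2$ this already finishes the proof, since $\GA_2(\Ik)=\TA_2(\Ik)$ by Jung--van der Kulk. For $n\geq3$ a genuine gap remains, and this I expect to be the main obstacle: $\GLIN_n(\Ik)$ is normal in $\GA_n(\Ik)$ and contains $\TA_n(\Ik)$, hence contains the normal closure of $\TA_n(\Ik)$, but showing that this normal closure is all of $\GA_n(\Ik)$ --- equivalently, that every polynomial automorphism is a product of $\GA_n(\Ik)$-conjugates of tame automorphisms --- is not, to my knowledge, known. The promising input is that $\GLIN_n(\Ik)$ already contains wild automorphisms (Maubach--Poloni showed the Nagata map is one), so one would hope that such maps together with $\TA_n(\Ik)$ normally generate the whole group; but bridging this last step appears to require essentially new ideas, which is presumably why the general statement is still posed as a conjecture for $n\geq3$.
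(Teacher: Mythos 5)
The statement you are proving is labeled a \emph{conjecture} in the paper, and the paper offers no proof of it; your proposal does not close it either, as you yourself say. To be precise about where your argument stands: the reduction via the Jacobian determinant, the translation trick $h^{-1}(t_v h t_v^{-1}) = t_{(\lambda^{-1}-1)v}$, and the conjugation of $e_{x_2}$ by the shear $\psi$ are all correct, and together they establish $\TA_n(\Ik)\subseteq\GLIN_n(\Ik)$, hence $\GLIN_n(\Ik)=\langle\TA_n(\Ik)\rangle^G$. This is exactly the content of Theorem \ref{thm:GLIN} (Maubach--Poloni), which the paper cites, and it does finish the case $n=2$ by Jung--van der Kulk, as you note. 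Incidentally, your degree induction for general elementary automorphisms (whose positive-characteristic details you leave vague) can be replaced by a one-line identity that works for any monomial $M$ and any $\Ik\neq\IF_2$: pick $b\in\Ik^*$ with $b\neq 1$ and observe $\epsilon_{1,aM}=\delta_{1,b}^{-1}\bigl(\epsilon_{1,cM}^{-1}\delta_{1,b}\epsilon_{1,cM}\bigr)$ for $c=ab(b-1)^{-1}$; this is the computation the paper records after Theorem \ref{thm:GLIN}, and it is precisely the step that fails inside $\SA_n(\Ik)$ (since $\delta_{1,b}\notin\SL_n(\Ik)$), which is why the paper's Theorem \ref{thm:SLIN} needs the more delicate case analysis.

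The genuine gap is the one you identify in your last paragraph: for $n\geq 3$ nothing in your argument (or in the literature) shows that $\langle\TA_n(\Ik)\rangle^G=\GA_n(\Ik)$, i.e.\ that every polynomial automorphism is a product of $\GA_n$-conjugates of tame ones. Since Shestakov--Umirbaev show $\TA_3(\Ik)\subsetneq\GA_3(\Ik)$ in characteristic zero, this is not vacuous, and the observation that some wild maps (e.g.\ Nagata's) lie in $\GLIN_n(\Ik)$ is suggestive but not a proof. This missing step is the entire content of Conjecture \ref{con:g} for $n\geq 3$; your write-up should present itself as a reduction of the conjecture to that open question rather than as a proof.
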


This is trivial for $n=1$, and a consequence of the Jung-van der Kulk theorem for $n=2$ (see Theorem \ref{thm:GLIN}), but remains  open for $n \geq 3$.  We remark that Maubach and Willems  showed the necessity of the $\Ik\neq \IF_2$ hypothesis in \cite{Maubach-Willems}. Here, we add the following, slightly stronger conjecture:
\begin{conjecture}\label{con:s}
If $\Ik \neq \IF_2$, then $\SLIN_n(\Ik) = \SA_n(\Ik)$.
\end{conjecture}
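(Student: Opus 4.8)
The plan is to prove $\SLIN_n(\Ik)=\SA_n(\Ik)$ by establishing the inclusion $\SA_n(\Ik)\subseteq\SLIN_n(\Ik)$, the reverse being immediate from $\SL_n(\Ik)\subseteq\SA_n(\Ik)$. I would split the problem into a tame part and a wild part. In dimension $n=2$ there is no wild part: by the theorem of Furter and Lamy every nontrivial $4$-triangular automorphism normally generates $\SA_2(\Ik)$, and any nontrivial $\sigma\in\SL_2(\Ik)$ is affine, hence $0$-triangular and so $4$-triangular; thus $\SLIN_2(\Ik)\supseteq\langle\sigma\rangle^S=\SA_2(\Ik)$. The substance is therefore entirely in dimension $n\geq 3$.

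For the tame part, the goal is $\EA_n(\Ik)\subseteq\SLIN_n(\Ik)$; combined with the known identity $\EA_n(\Ik)=\SA_n(\Ik)\cap\TA_n(\Ik)$ this disposes of every tame special automorphism. The engine is a commutator identity: for the linear shear $s=(x_1+x_2,x_2,x_3,\ldots,x_n)\in\SL_n(\Ik)$ and the elementary map $u=(x_1,x_2+h(x_3),x_3,\ldots,x_n)$ one computes $s\,u\,s^{-1}u^{-1}=(x_1+h(x_3),x_2,\ldots,x_n)$, and since $\SLIN_n(\Ik)$ is normal in $\SA_n(\Ik)$ and contains both $s$ and $u\,s^{-1}u^{-1}$, it contains this automorphism for every $h$. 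Conjugating such maps inside $\SLIN_n(\Ik)$ by further shears in $\SL_n(\Ik)$ (to introduce the remaining variables) and by diagonal and coordinate-permutation maps in $\SL_n(\Ik)$, and then separating the resulting monomials by a standard Vandermonde / independence-of-characters argument --- which is exactly the step that uses the hypothesis $\Ik\neq\IF_2$, so that $\Ik^*$ is nontrivial --- one obtains every elementary generator, hence $\EA_n(\Ik)\subseteq\SLIN_n(\Ik)$. Alternatively one may deduce $\EA_n(\Ik)\subseteq\SLIN_n(\Ik)$ from the $4$-triangular theorem applied to a nontrivial $\sigma\in\SL_n(\Ik)$, together with the observation that $\langle\SL_n(\Ik)\rangle^{\SA_n(\Ik)\cap\TA_n(\Ik)}$ is already all of $\EA_n(\Ik)$.

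The wild part is the main obstacle, and here the statement is genuinely open. Given $\EA_n(\Ik)\subseteq\SLIN_n(\Ik)$ and the normality of $\SLIN_n(\Ik)$ in $\SA_n(\Ik)$, what remains is $\langle\EA_n(\Ik)\rangle^S=\SA_n(\Ik)$: every special automorphism should be a product of $\SA_n(\Ik)$-conjugates of tame ones. Individual wild maps can be fed in by hand --- for $n=3$, the result of Maubach and Poloni places the Nagata map in $\GLIN_3(\IC)$, hence, it being special, in $\SLIN_3(\IC)$, and one can hope to treat other explicit families the same way --- but capturing \emph{all} of $\SA_n(\Ik)$ requires structural control of $\SA_n(\Ik)$ that is presently unavailable: the tame generators problem is open for $n\geq 4$, and even for $n=3$ it is not known whether every special automorphism is a product of conjugates of tame automorphisms. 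I expect a full proof will need either an analogue for $\SA_n(\Ik)$ of the Bass--Serre-tree machinery underlying the dimension-two argument of Furter--Lamy, or a linearization theorem strong enough to place arbitrary special automorphisms in $\GLIN_n(\Ik)$; absent that, the conjecture is best viewed as reduced to its content on wild automorphisms, with the tame case settled as above. (The exclusion of $\IF_2$ is necessary, by the example of Maubach and Willems.)
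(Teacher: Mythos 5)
The statement you were asked about is Conjecture \ref{con:s} of the paper: it is stated there without proof, and, as you correctly recognize, it is open. So the honest verdict is that your proposal is not a proof and does not claim to be one. Your reduction of the conjecture to the two assertions (i) $\EA_n(\Ik)\subseteq\SLIN_n(\Ik)$ and (ii) $\langle\EA_n(\Ik)\rangle^S=\SA_n(\Ik)$ is exactly the framing the paper itself adopts: its Theorem \ref{thm:SLIN} proves a version of (i) and thereby reduces the conjecture to (ii), which contains the tame generators problem and is wide open for $n\geq 3$. Your identification of (ii) as the genuine obstruction is correct and is where the paper, too, stops.

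That said, you overstate what the ``tame part'' buys over a general field. Your commutator identity $s\,u\,s^{-1}u^{-1}=\epsilon_{1,h}$ is correct and does place $\epsilon_{1,h}$ in $\SLIN_n(\Ik)$ for $h\in\Ik[x_3,\ldots,x_n]$, but the step that introduces the remaining variable and ``separates monomials by Vandermonde'' genuinely fails in positive characteristic: in characteristic $p$ the additive span of $\{(ax_2+bx_3)^p : a,b\in\Ik\}$ misses every mixed monomial of degree $p$, and similar degeneracies of binomial coefficients occur in all degrees. The paper's Theorem \ref{thm:SLIN} has to work much harder (a two-case induction on the degree of the monomial, using diagonal conjugation $\delta_{i,j,b}$) and in the end only establishes $\SLIN_n(\Ik)=\langle\EA_n(\Ik)\rangle^S$ for fields that are \emph{not} prime fields $\IF_p$; the paper explicitly leaves open whether $(x_1+x_2^5,x_2)\in\SLIN_2(\IF_3)$. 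So $\Ik\neq\IF_2$ is not, as you assert, the only hypothesis your separation step needs, and even the ``settled'' half of your reduction is settled only for infinite fields and non-prime finite fields. Likewise, your $n=2$ argument via Furter--Lamy and your alternative route via the $4$-triangular theorem rest on characteristic-zero results, and the ``observation'' that $\langle\SL_n(\Ik)\rangle^{\SA_n(\Ik)\cap\TA_n(\Ik)}=\EA_n(\Ik)$ is not an observation --- it is essentially the content of Theorem \ref{thm:SLIN} and carries the same restrictions on $\Ik$.
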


In section \ref{secSLIN}, we study the group $\SLIN_n(\Ik)$ in any characteristic, and show that $\SLIN_n(\Ik) = \langle \EA_n(\Ik)\rangle ^S$ for all fields other than $\IF_p$ for a prime $p$.  Since $\TA_n(\Ik) \cap \SA_n(\Ik)=\EA_n(\Ik)$, this motivates us to make the following definition.

\begin{definition}
A special automorphism $\theta \in \SA_n(\Ik)$ is called {\em normally co-tame} if $\langle \theta \rangle ^S \geq  \SLIN_n(\Ik)$.  
\end{definition}

Note that if Conjecture \ref{con:s} is true, then an automorphism $\theta \in \SA_n(\Ik)$ is normally co-tame if and only if $\langle \theta \rangle ^S = \SA_n(\Ik)$.  Thus, in this paper we turn our attention to describing classes of maps that are normally co-tame.  In particular, we generalize a result of Furter and Lamy to all dimensions, and show

\begin{theorem}[Main Theorem]
Over an field of characteristic zero, every nontrivial $4$-triangular automorphism is normally co-tame.
\end{theorem}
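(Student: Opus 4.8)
The plan is to reduce the general statement to a normal form and then produce a chain of commutators landing inside $\langle \theta \rangle^S$ that eventually contains a nontrivial elementary automorphism, from which one concludes via the structural result $\SLIN_n(\Ik) = \langle \EA_n(\Ik)\rangle^S$ quoted from Section~\ref{secSLIN}. Concretely, I would first normalize: given a nontrivial $4$-triangular $\theta = \alpha_0 \tau_1 \alpha_1 \tau_2 \alpha_2 \tau_3 \alpha_3 \tau_4 \alpha_4 \in \SA_n(\Ik)$, I would absorb the affine pieces and split each $\tau_i$ into its linear and strictly-triangular parts so that, after conjugating by a suitable affine map (which does not change $\langle\theta\rangle^S$), $\theta$ is conjugate to a product of the form $\ell \cdot \delta_1 \cdots \delta_4$ where each $\delta_i$ is a triangular automorphism whose linear part is unipotent (or trivial) and $\ell \in \SL_n$; since $\SL_n \subset \SLIN_n(\Ik) \subseteq \langle\theta\rangle^S$ is exactly what we are trying to prove, we cannot assume $\ell$ away for free, but we can multiply $\theta$ on either side by elements of $\langle\theta\rangle^S$ once we have shown $\SL_n \subseteq \langle\theta\rangle^S$, so the real goal is just to get one nontrivial elementary map.

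The key mechanism is the commutator trick from Furter--Lamy: for $g \in \langle\theta\rangle^S$ and any $h \in \SA_n(\Ik)$, the commutator $[h, g] = h g h^{-1} g^{-1}$ lies in $\langle\theta\rangle^S$, and by choosing $h$ to be a well-chosen triangular or affine automorphism one can arrange massive cancellation in the degree or support of $g$, producing a new, ``simpler'' nontrivial element of $\langle\theta\rangle^S$. In dimension two Furter and Lamy exploit the amalgamated free product structure of $\GA_2$; since that tool is unavailable for $n \geq 3$, I would instead run the reduction directly on the triangular factors, tracking a complexity measure such as (number of nontrivial triangular factors, then total degree). The heart of the argument is: starting from a nontrivial $4$-triangular element, conjugate and commute to kill triangular factors one at a time until one reaches a nontrivial element that is $1$-triangular, i.e.\ affine-conjugate to a single triangular map; such a map, if its linear part is nontrivial semisimple one can handle by a standard argument, and if it is unipotent one commutates with a generic linear map or translation to extract an elementary automorphism fixing $n-1$ variables. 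Once a single nontrivial elementary automorphism $e$ lies in $\langle\theta\rangle^S$, conjugating by the full affine group and by elementary automorphisms (all inside $\SA_n$) sweeps out $\langle \EA_n(\Ik)\rangle^S = \SLIN_n(\Ik)$, which is the definition of normally co-tame.

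The step I expect to be the main obstacle is the ``descent'' from the $4$-triangular case to something genuinely simpler while controlling that nontriviality is preserved: a naive commutator can easily collapse the element to the identity, so the choice of the auxiliary maps $h$ must be made carefully and case-by-case depending on the linear parts of the triangular factors (semisimple vs.\ unipotent vs.\ mixed) and on which variables actually appear. This is exactly where the bound $4$ is used in dimension two, and in higher dimensions I anticipate needing a more hands-on bookkeeping argument, possibly splitting into the case where some linear part has an eigenvalue $\neq 1$ (where one can use an eigenvalue-based conjugation, à la the $\SL_n$-conjugacy arguments) and the purely unipotent case (where one works with the associated locally nilpotent derivations / exponential structure and the degree filtration). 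A secondary technical point is handling small fields or exceptional configurations, but since the Main Theorem restricts to characteristic zero, the field is infinite and these degeneracies do not arise. If this direct descent proves too delicate, a fallback is to first establish the theorem for a convenient generating family of $4$-triangular maps (e.g.\ those with a single Jonquières-type factor) and then show every nontrivial $4$-triangular map is $\SA_n$-conjugate, after a preliminary commutator, into that family.
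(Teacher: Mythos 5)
Your overall strategy --- commute $\theta$ with well-chosen auxiliary maps to produce simpler nontrivial elements of $\langle\theta\rangle^S$, terminate at a nontrivial elementary automorphism, and conclude via $\SLIN_n(\Ik)=\langle\EA_n(\Ik)\rangle^S$ --- is the right one and matches the paper's philosophy, but the proposal stops exactly where the work has to start: the descent is asserted rather than performed, and the two ideas that make it run are absent. First, a commutator of a $4$-triangular $\phi$ with a generic $h$ is a priori $8$-triangular, so your proposed complexity measure ``number of nontrivial triangular factors, then total degree'' does not decrease under the operation you intend to iterate. The paper's resolution is to take $h$ to be a translation conjugated by the outermost linear factor of $\phi$, so that it slides past the outer triangular factor (conjugating a translation by a triangular map yields a translation) and the commutator collapses into the palindromic normal form $\tau_1\alpha_1\tau_2\alpha_2\tau_3\alpha_2^{-1}\tau_2^{-1}\alpha_1^{-1}$ (the Claim in the proof of Theorem \ref{thm:4triangular}). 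Subsequent commutators preserve this shape while strictly decreasing the lexicographic \emph{vector degree} of the innermost factor $\tau_3$ (Lemma \ref{lem:vdreduce}); that is the well-founded induction, and neither total degree nor factor count serves as a substitute.

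Second, you correctly flag that ``a naive commutator can easily collapse the element to the identity,'' but you do not resolve it, and it is not an exceptional-configuration issue dispatched by characteristic zero: an automorphism can commute with every translation of the form $\alpha\epsilon_{n,c}\alpha^{-1}$ for structural reasons. The paper's answer (Lemma \ref{lem:conjugateparabolic} together with Corollaries \ref{cor:parabolic} and \ref{cor:choosec}) is that if all such commutators vanish then $\phi$ is $\SL_n$-conjugate to a parabolic automorphism, and parabolic maps are handled by a separate direct argument that extracts a nontrivial elementary automorphism. Without (a) the palindromic reduction, (b) the vector-degree induction on the middle factor, and (c) the parabolic degenerate case, the proposal is a program rather than a proof; each of these is a genuine missing idea you would have to supply.
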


We also quickly show that a class of exponential maps, including the (wild) Nagata map, are all normally co-tame (cf. \cite{Maubach-Poloni}).  Finally, we show that a related class consisting of triangular maps composed with exponential maps are all normally co-tame.  In particular, this shows that the example of van den Essen \cite{vandenEssen} that does not satisfy Problem 1 does in fact lie in $\SLIN_n(\Ik)$, lending some more support to Conjectures \ref{con:g} and \ref{con:s}.  


\section{Preliminaries}
We begin by recalling some standard definitions; see \cite{vandenEssen} for a general reference on polynomial automorphisms.  We use $\Ik^{[n]}=\Ik[x_1,\ldots,x_n]$ to denote the $n$-variable polynomial ring.

\begin{itemize}
\item $\GA_n(\Ik)$ is the group of automorphisms of $\Spec \Ik^{[n]}$ over $\Spec \Ik$.  It is anti-isomorphic to the group of $\Ik$-automorphisms of $\Ik^{[n]}$.  We abuse this correspondence freely, and for $\phi \in \GA_n(\Ik)$ and $P \in \Ik^{[n]}$ will write $(P)\phi$ for the image of $P$ under the corresponding automorphism of $\Ik^{[n]}$.  By writing the automorphism on the right, the usual composition holds, namely if $\psi \in \GA_n(\Ik)$ as well, then $(P)\phi \psi = ((P)\phi)\psi$.
\item $\Tr_n(\Ik)$ denotes the group of translations.  
\item $\EA_n(\Ik)$ denotes the subgroup generated by elementary automorphisms, i.e. those of the form $$(x_1,\ldots,x_{i-1},x_i+P(x_1,\ldots,x_{i-1},x_{i+1},\ldots,x_n),x_{i+1},\ldots,x_n)$$
for some $P \in \Ik[x_1,\ldots,x_{i-1},x_{i+1},\ldots,x_n]$.
\item $\BA_n(\Ik)$ denotes the subgroup of (lower) triangular automorphisms, i.e. those of the form $$\left(a_1x_1+P_1,a_2x_2+P_2(x_1),\ldots,a_nx_n+P_n(x_1,\ldots,x_{n-1})\right)$$
for some $a_i \in \Ik^*$ and $P_i \in \Ik[x_1,\ldots,x_{i-1}].$
\item The tame subgroup is $\TA_n(\Ik) = \langle \EA_n(\Ik), \GL_n(\Ik) \rangle = \langle \BA_n(\Ik), \Af_n(\Ik) \rangle$.  
\item We use $\D_n(\Ik)$ to denote the diagonal subgroup of $\GL_n(\Ik)$, and define $\Df_n(\Ik) = \D_n(\Ik) \ltimes \Tr_n(\Ik)$.  This group consists of all automorphisms of the form 
$$(a_1x_1+b_1,\ldots,a_nx_n+b_n)$$ for some $a_i \in \Ik^*$ and $b_i \in \Ik$.
\item $\PA_n(\Ik)$ is the group of parabolic automorphisms, i.e. those of the form 
$$\left(H_1,\ldots,H_{n-1}, a_nx_n+P_n(x_1,\ldots,x_{n-1})\right)$$
for some $H_i \in \Ik ^{[n-1]}$, $a_n \in \Ik^*$, and $P_n \in \Ik[x_1,\ldots,x_{n-1}]$.  
\end{itemize}

\begin{definition}
We define the {\em vector degree} $\vd: \BA_n  (\Ik) \rightarrow \IN^n$ by writing $\tau = \left(a_1x_1+P_1,a_2x_2+P_2(x_1),\ldots,a_nx_n+P_n(x_1,\ldots,x_{n-1})\right)$ for some $a_i \in \Ik$, $P_i \in \Ik[x_1,\ldots,x_{i-1}]$
and setting 

$$\vd(\tau) = \left( \deg \left( P_1 \right), \ldots, \deg \left(P_n\right) \right).$$
We will, somewhat unusually, adopt the convention that $\deg(0)=0$ for convenience.
\end{definition}

\begin{example}
$\vd\left( (x_1+2,x_2+x_1^2,x_3-x_1^2+x_1x_2^4) \right) = (0,2,5)$.
\end{example}

It will be convenient to order $\IN^n$ lexicographically; we denote this partial order by $\llex$ and write, for example, $(0,2,5) \llex (0,3,3)$.
The utility of the vector degree is made clear by the following lemma.

\begin{lemma}\label{lem:vdreduce}
Let $\tau \in \BA_n (\Ik)$.
\begin{enumerate}
\item We have $\tau \in \Df_n(\Ik)$ if and only if $\vd(\tau)=(0,\ldots,0)$.
\item If $\gamma \in \Tr_n(\Ik)$ and $\tau \notin \Df_n(\Ik)$, then $\vd \left(\tau ^{-1} \gamma \tau\right) \llex \vd \left(\tau\right)$.
\end{enumerate}
\end{lemma}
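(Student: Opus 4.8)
The plan is to treat the two parts separately, with the first being essentially a restatement of the definition and the second being the substantive claim. For part~(1), I would simply unwind the definition of $\vd$: if $\tau = (a_1x_1+P_1, a_2x_2+P_2(x_1),\ldots,a_nx_n+P_n(x_1,\ldots,x_{n-1}))$, then $\vd(\tau) = (0,\ldots,0)$ means $\deg P_i = 0$ for each $i$ (using the convention $\deg 0 = 0$), i.e.\ each $P_i$ is a constant $b_i \in \Ik$, which is exactly the condition that $\tau \in \Df_n(\Ik)$. Conversely every element of $\Df_n(\Ik)$ visibly has this form. (One should note the harmless asymmetry that $P_1$ is also allowed to be a nonconstant polynomial in no variables, i.e.\ just a constant, so $\deg P_1 = 0$ always; this is why the first coordinate of $\vd(\tau)$ is always $0$ and causes no trouble.)

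For part~(2), write $\tau = (a_1x_1+b_1, a_2x_2+P_2(x_1),\ldots,a_nx_n+P_n(x_1,\ldots,x_{n-1}))$ and $\gamma = (x_1+c_1,\ldots,x_n+c_n) \in \Tr_n(\Ik)$, and let $\sigma := \tau^{-1}\gamma\tau$. The first step is to observe that $\sigma \in \BA_n(\Ik)$ (the triangular group is normal in the parabolic group, or one can just check directly that conjugating a translation by a triangular map stays triangular), and in fact $\sigma$ has trivial linear part, i.e.\ $\sigma = (x_1+Q_1,\ldots,x_n+Q_n(x_1,\ldots,x_{n-1}))$ with $Q_i \in \Ik[x_1,\ldots,x_{i-1}]$, since $\gamma$ has trivial linear part and conjugation by the triangular $\tau$ preserves this. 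The key computational step is then to identify $\deg Q_i$ coordinate by coordinate. Using $\sigma\tau = \tau\gamma$ and comparing the $i$-th coordinates (recalling automorphisms act on the right), one gets a relation of the shape $a_ix_i + P_i(x_1,\ldots,x_{i-1}) + Q_i\bigl((x_1)\tau,\ldots,(x_{i-1})\tau\bigr)\cdot(\text{appropriate substitution}) = a_i x_i + P_i(\ldots) \circ(\gamma\text{-shift})$; after cancellation this expresses $Q_i$ evaluated at the $\tau$-images of $x_1,\ldots,x_{i-1}$ as the difference $P_i(x_1+c_1,\ldots) - P_i(x_1,\ldots)$ up to the factor $a_i$. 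I would then argue that (a) for $i=1$ this difference is $0$, so $Q_1 = 0$; and (b) for $i \geq 2$, the degree of $P_i(x_1+c_1,\ldots,x_{i-1}+c_{i-1}) - P_i(x_1,\ldots,x_{i-1})$ is \emph{strictly less} than $\deg P_i$ (a first difference drops the degree), while substituting the $\tau$-images $(x_j)\tau$, which are polynomials, can only be undone to recover $Q_i$ as a polynomial of degree $\le$ that of the difference, since the $(x_j)\tau$ have positive degree in general — more carefully, one shows $\deg Q_i \le \deg\bigl(P_i(x+c)-P_i(x)\bigr) < \deg P_i$ whenever $P_i \neq 0$, and $\deg Q_i = 0$ when $P_i$ is constant.

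Granting that coordinatewise degree comparison, the conclusion follows: let $k$ be the smallest index with $P_k \notin \Ik$, which exists since $\tau \notin \Df_n(\Ik)$ by part~(1); then for $i < k$ we have $\deg Q_i = 0 = \deg P_i$, and for $i = k$ we have $\deg Q_k < \deg P_k$, so the first coordinate in which $\vd(\sigma)$ and $\vd(\tau)$ differ has $\vd(\sigma)_k$ strictly smaller, giving $\vd(\tau^{-1}\gamma\tau) = \vd(\sigma) \llex \vd(\tau)$ as required.

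The main obstacle I anticipate is the bookkeeping in the key computational step: making precise the claim that recovering $Q_i$ from its values $Q_i((x_1)\tau,\ldots,(x_{i-1})\tau)$ cannot increase the degree beyond that of the first difference $P_i(x+c)-P_i(x)$. This needs care because the substitution $x_j \mapsto (x_j)\tau = a_jx_j + (\text{lower-index terms})$ is itself a triangular (invertible) change of coordinates, so degree is not literally preserved; the cleanest fix is to note that this substitution is the restriction of a triangular automorphism of $\Ik[x_1,\ldots,x_{i-1}]$ whose inverse is again triangular, and to track degrees through both directions, or alternatively to induct on $i$ and substitute back using the already-understood lower coordinates. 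Once that is set up cleanly the rest is routine.
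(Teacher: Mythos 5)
Your proposal is correct and follows essentially the same route as the paper: compute $\tau^{-1}\gamma\tau$ coordinate by coordinate, observe that up to the first index $k$ with $P_k$ nonconstant the new coordinates are just translations by constants, and that the $k$-th polynomial part is (up to constants) the first difference $P_k(x+c)-P_k(x)$, whose degree strictly drops, which settles the lexicographic comparison. One small caution: your intermediate claim that $\deg Q_i<\deg P_i$ for \emph{all} $i\ge 2$ with $P_i$ nonconstant can fail for $i>k$ (there the lower-coordinate substitutions are no longer by constants, e.g.\ $\tau=(x_1,x_2+x_1^2,x_3+x_2)$ conjugating $(x_1+1,x_2,x_3)$ gives $Q_3=2x_1+1$ with $\deg Q_3=\deg P_3$), but as your final paragraph correctly notes, only the coordinates $i\le k$ matter for $\llex$, so the argument is unaffected.
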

\begin{proof}
The first statement is immediate from our definition of $\Df_n(\Ik)$.  For the second, write $\tau = (a_1x_1+P_1,a_2x_2+P_2(x_1),\ldots,a_nx_n+P_n(x_1,\ldots,x_{n-1}))$ for some $a_i \in \Ik^*$, $P_i \in \Ik[x_1,\ldots,x_{i-1}]$, and write $\gamma =(x_1+b_1,\ldots,x_n+b_n)$ for some $b_i \in \Ik$.  Since $\tau \notin \Df_n(\Ik)$, we have $(0,\ldots,0) \llex \vd(\tau)$.  Therefore we let $r>1$ be minimal with $\deg P_r >0$, so that $P_1,\ldots,P_{r-1} \in \Ik$.  Then it is easy to see that for $i<r$, $(x_i)\tau ^{-1} \gamma \tau = x_i+\frac{a_i}{b_i}$, and
$$(x_r)\tau ^{-1} \gamma \tau = x_r+\frac{b_r}{a_r}+\frac{1}{a_r}\left( P_r(x_1,\ldots,x_{r-1})-P_r\left(x_1+\frac{b_1}{a_1},\ldots,x_{r-1}+\frac{b_{r-1}}{a_{r-1}}\right)\right).$$
Taylor's theorem then implies $\vd \left(\tau ^{-1} \epsilon \tau\right) \llex \vd \left(\tau\right)$.
\end{proof}

\section{The group $\SLIN_n(\Ik)$}\label{secSLIN}
In this section, our goal is a characterization of the group $\SLIN_n(\Ik)$ in Theorem \ref{thm:SLIN}.  We also prove some useful lemmas along the way.  
We make the following definitions for convenience:  
\begin{definition} Let $1 \leq i,j \leq n$ with $i \neq j$, let $f \in \Ik[x_1,\ldots,x_{i-1},x_{i+1},\ldots,x_n]$, and let $c \in \Ik^*$.  Then we define $\epsilon _{i,f} \in \EA_n(\Ik)$,  $\delta _{i,c} \in \GL_n(\Ik)$, and  $\delta _{i,j,c} \in \SL_n(\Ik)$ by 
\begin{align*}
\epsilon _{i,f} & =(x_1,\ldots,x_{i-1},x_i+f,x_{i+1},\ldots,x_n), \\
\delta _{i,c} &= (x_1,\ldots,x_{i-1},cx_i,x_{i+1},\ldots,x_n), \\
\delta _{i,j,c} &= \delta _{i,c} \delta _{j,c^{-1}} .
\end{align*}
\end{definition}

A direct computation yields the following useful commutator formula.
\begin{lemma}\label{lem:commutator}
Let $1 \leq i , j \leq n$ with $i \neq j$, let $a \in \Ik$ and let $b \in \Ik^*$. Then 
$$\epsilon _{i,a}^{-1} \delta _{i,j,b} \epsilon _{i,a} \delta _{i,j,b}^{-1}  =\epsilon _{i,ab-a}.$$
\end{lemma}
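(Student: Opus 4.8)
The plan is simply to carry out the direct computation the lemma advertises: check that $\sigma := \epsilon_{i,a}^{-1}\delta_{i,j,b}\epsilon_{i,a}\delta_{i,j,b}^{-1}$ and $\epsilon_{i,ab-a}$ agree as substitutions on every coordinate function $x_k$, using the right-action convention $(P)\phi\psi = ((P)\phi)\psi$ from the preliminaries. It suffices to show $(x_i)\sigma = x_i + (ab-a)$ and $(x_k)\sigma = x_k$ for all $k \neq i$.

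For indices $k \notin \{i,j\}$ all four factors fix $x_k$, so $(x_k)\sigma = x_k$ with nothing to check. For $k = j$, the factors $\epsilon_{i,a}^{\pm 1}$ fix $x_j$, while $\delta_{i,j,b}$ sends $x_j \mapsto b^{-1}x_j$ and $\delta_{i,j,b}^{-1}$ sends $x_j \mapsto b x_j$; these two scalings cancel, so $(x_j)\sigma = x_j$. The one substantive case is $k = i$: tracking $x_i$ through the four substitutions in order, $\epsilon_{i,a}^{-1}$ yields $x_i - a$, then $\delta_{i,j,b}$ (substituting $x_i \mapsto b x_i$) yields $b x_i - a$, then $\epsilon_{i,a}$ (substituting $x_i \mapsto x_i + a$) yields $b(x_i + a) - a = b x_i + ab - a$, and finally $\delta_{i,j,b}^{-1}$ (substituting $x_i \mapsto b^{-1}x_i$) yields $x_i + ab - a$. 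Thus $(x_i)\sigma = x_i + (ab-a) = (x_i)\epsilon_{i,ab-a}$, and combined with the other cases this gives $\sigma = \epsilon_{i,ab-a}$.

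There is no genuine obstacle here; the computation is elementary, which is why the lemma is stated as a direct computation. The only point requiring care is the composition convention: reading the product as successive substitutions from left to right is exactly what produces the clean constant $ab - a$, whereas the reverse reading would produce $\epsilon_{i,ab^{-1}-a}$ instead, so it is worth double-checking consistency with the paper's right-action notation before relying on the formula.
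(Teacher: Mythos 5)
Your computation is correct and is exactly the ``direct computation'' the paper invokes (the paper gives no written proof of this lemma), including the correct handling of the right-action convention that yields the constant $ab-a$ rather than $ab^{-1}-a$. Nothing further is needed.
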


\begin{lemma}\label{lem:tr}
Let $1 \leq i \leq n$, and let $c \in \Ik^*$.  Then $\langle \epsilon_{i,c} \rangle ^S = \langle \Tr_n(\Ik)\rangle ^S $.
\end{lemma}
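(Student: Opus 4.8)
The plan is to prove the two inclusions separately. The inclusion $\langle \epsilon_{i,c}\rangle^S \subseteq \langle \Tr_n(\Ik)\rangle^S$ is trivial, since $\epsilon_{i,c} = (x_1,\ldots,x_{i-1},x_i+c,x_{i+1},\ldots,x_n)$ is itself a translation, i.e.\ $\epsilon_{i,c} \in \Tr_n(\Ik)$. For the reverse inclusion, note that $\langle \Tr_n(\Ik)\rangle^S$ is by definition the smallest normal subgroup of $\SA_n(\Ik)$ containing $\Tr_n(\Ik)$, while $\langle \epsilon_{i,c}\rangle^S$ is \emph{some} normal subgroup of $\SA_n(\Ik)$; hence it suffices to show $\Tr_n(\Ik) \subseteq \langle \epsilon_{i,c}\rangle^S$, and since the translations $\epsilon_{j,a}$ (for $1\le j\le n$, $a \in \Ik$) generate $\Tr_n(\Ik)$, it is enough to show each $\epsilon_{j,a}$ lies in $\langle \epsilon_{i,c}\rangle^S$.

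I would do this in two moves. First, realize every scalar in the $i$th coordinate: by the commutator formula of Lemma~\ref{lem:commutator}, for any index $k \neq i$ and any $b \in \Ik^*$ we have $\epsilon_{i,c}^{-1}\delta_{i,k,b}\epsilon_{i,c}\delta_{i,k,b}^{-1} = \epsilon_{i,cb-c}$, so $\epsilon_{i,a} \in \langle \epsilon_{i,c}\rangle^S$ for every $a$ in $\{c(b-1) : b \in \Ik^*\} = \Ik \setminus \{-c\}$ (using $c \neq 0$); the single remaining value is obtained by composing two such elements, e.g.\ $\epsilon_{i,-c} = \epsilon_{i,-2c}\,\epsilon_{i,c}$ with $-2c = c((-1)-1)$. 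Thus $\epsilon_{i,a} \in \langle \epsilon_{i,c}\rangle^S$ for all $a \in \Ik$. Second, pass to an arbitrary coordinate $j \neq i$ by conjugating with the element of $\SL_n(\Ik) \subseteq \SA_n(\Ik)$ acting on the $(i,j)$-plane by the rotation $e_i \mapsto e_j$, $e_j \mapsto -e_i$ and fixing the other coordinates; conjugating $\epsilon_{i,a}$ by this element yields $\epsilon_{j,\pm a}$, and letting $a$ range over $\Ik$ gives every $\epsilon_{j,a}$. Hence $\Tr_n(\Ik) \subseteq \langle \epsilon_{i,c}\rangle^S$, which completes the proof.

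Alternatively — and perhaps more transparently — one can bypass the commutator formula: conjugation by a linear map $g \in \GL_n(\Ik)$ carries the translation by a vector $v \in \Ik^n$ to the translation by $A v$, where $A$ is the matrix of $g$ (a one-line check in the right-action convention of the paper), and $\SL_n(\Ik)$ acts transitively on $\Ik^n \setminus \{0\}$ as soon as $n \geq 2$; since $\epsilon_{i,c}$ is the translation by the nonzero vector $c\,e_i$, conjugating it by suitable elements of $\SL_n(\Ik) \subseteq \SA_n(\Ik)$ produces every nonzero translation (and the identity is in any subgroup), hence all of $\Tr_n(\Ik)$.

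I do not anticipate a serious obstacle: the only real content is producing every scalar in every coordinate, and the two mild technical points are (i) the value $a = -c$ that the commutator formula misses, handled by the two-step identity above (and a non-issue in characteristic $2$, where $-c = c$), and (ii) the standing hypothesis $n \geq 2$, which is exactly what makes the needed elements of $\SL_n(\Ik)$ available — a signed transposition in the first argument, a transitive action on nonzero vectors in the second.
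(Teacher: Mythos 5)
Your main argument is correct and is essentially the paper's proof: same trivial first inclusion, same use of Lemma~\ref{lem:commutator} to produce every $\epsilon_{i,a}$ (the paper handles the exceptional value more directly via $\epsilon_{i,-c}=\epsilon_{i,c}^{-1}$, but your two-step composition works equally well), and then a single conjugation to move to the other coordinates. The only substantive difference in that step is the choice of conjugator: the paper conjugates $\epsilon_{i,d}$ by the elementary shear $\epsilon_{j,x_i}$ and then strips off the residual $\epsilon_{i,d}$ factor, whereas you conjugate by a signed permutation in $\SL_n(\Ik)$, which lands directly on $\epsilon_{j,\pm a}$ with no correction term needed; both are one-line moves inside $\SA_n(\Ik)$. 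Your alternative argument is genuinely different and arguably cleaner: since conjugating a translation by a linear map just transports its translation vector, and $\SL_n(\Ik)$ acts transitively on $\Ik^n\setminus\{0\}$ for $n\ge 2$, the single nonzero translation $\epsilon_{i,c}$ already normally generates all of $\Tr_n(\Ik)$, with no appeal to Lemma~\ref{lem:commutator} at all. What the paper's route buys in exchange is that the commutator identity of Lemma~\ref{lem:commutator} is reused elsewhere (e.g.\ in Theorem~\ref{thm:Tr=SLIN}), so exercising it here costs nothing; your transitivity argument is the one to reach for if one wants the lemma in isolation. Both versions correctly rely on $n\ge 2$, which you are right to flag since the statement fails for $n=1$.
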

\begin{proof}
Since $\epsilon _{i,c} \in \Tr_n(\Ik)$, we have $\langle \epsilon_{i,c} \rangle ^S  \leq \langle \Tr_n(\Ik)\rangle ^S$. To show the opposite containment, it suffices to show $\epsilon _{j,d} \in \langle \epsilon _{i,c} \rangle ^S$  for any $d \in \Ik$ and $1 \leq j \leq n$.  

We first claim that $\epsilon _{i,d} \in \langle \epsilon _{i,c} \rangle ^S$.  This is immediate if $d=-c$, as $\epsilon _{-c}=\epsilon _c ^{-1}$.  If $d \neq -c$, choose any $1\leq k \leq n$ with $k \neq i$.  Then by Lemma \ref{lem:commutator}, we have 
$$\epsilon _{i,d}=\epsilon _{i,c}^{-1} \left(\delta _{i,k,1+\frac{d}{c}}, \epsilon _{i,c} \delta _{i,k, 1+\frac{d}{c}}^{-1} \right)\in \langle \epsilon _{i,c}\rangle ^S.$$
We can now assume $j \neq i$ and note that 
$$\epsilon _{j,d} = \epsilon _{i,d}^{-1} \left(\epsilon _{j,x_i} \epsilon _{i,d} \epsilon _{j,x_i}^{-1} \right)\in \langle \epsilon _{i,d} \rangle ^S \leq \langle \epsilon _{i,c} \rangle ^S.$$
\end{proof}

\begin{corollary}\label{cor:tr} If $\gamma \in \Tr_n(\Ik)$ is not the identity, then $\langle \gamma \rangle ^S = \langle \Tr_n (\Ik)\rangle^S$
\end{corollary}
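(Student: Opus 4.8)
The plan is to conjugate $\gamma$ by a linear automorphism so as to land on one of the maps $\epsilon_{i,c}$ covered by Lemma~\ref{lem:tr}, and then invoke that lemma. Since $\gamma \neq \id$, write $\gamma = (x_1+b_1,\ldots,x_n+b_n)$ with $b := (b_1,\ldots,b_n) \neq 0$. Using $n \geq 2$ (which is already implicit in Lemmas~\ref{lem:commutator} and~\ref{lem:tr} through the condition $i \neq j$), I would extend $b$ to a basis of $\Ik^n$ and rescale one of the remaining basis vectors so that the associated change-of-basis matrix has determinant $1$; this produces $\alpha \in \SL_n(\Ik)$ whose underlying linear map carries $e_1$ to $b$.

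Next I would record the elementary fact that conjugating a translation by a linear automorphism is again a translation, with translation vector transformed by the inverse matrix; with the right-action composition convention of the paper this is a one-line check, and it gives $\alpha^{-1}\gamma\alpha = \epsilon_{1,1}$, since the translation vector becomes $A^{-1}b = e_1$. As $\alpha \in \SL_n(\Ik) \subseteq \SA_n(\Ik)$, we obtain $\epsilon_{1,1} \in \langle \gamma \rangle^S$; since $\langle \gamma \rangle^S$ is a normal subgroup of $\SA_n(\Ik)$ and $\langle \epsilon_{1,1} \rangle^S$ is by definition the smallest normal subgroup containing $\epsilon_{1,1}$, this yields $\langle \epsilon_{1,1} \rangle^S \leq \langle \gamma \rangle^S$.

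Finally, Lemma~\ref{lem:tr} (applied with $i=1$, $c=1$) gives $\langle \epsilon_{1,1} \rangle^S = \langle \Tr_n(\Ik) \rangle^S$, so $\langle \Tr_n(\Ik) \rangle^S \leq \langle \gamma \rangle^S$; the reverse inclusion holds trivially because $\gamma \in \Tr_n(\Ik)$, and equality follows. I do not expect any genuine obstacle here: the only two points needing care are the conjugation formula for translations and the observation that the conjugating matrix can be chosen in $\SL_n(\Ik)$ rather than merely in $\GL_n(\Ik)$, the latter being precisely what forces the hypothesis $n \geq 2$.
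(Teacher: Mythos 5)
Your proof is correct, but it takes a genuinely different route from the paper. The paper isolates a single nonzero coordinate $c_j$ of the translation vector via a commutator with a conjugate by the elementary automorphism $\epsilon_{i,x_j}$, obtaining $\epsilon_{i,c_j}=\gamma^{-1}\left(\epsilon_{i,x_j}\gamma\epsilon_{i,x_j}^{-1}\right)\in\langle\gamma\rangle^S$ and then invoking Lemma~\ref{lem:tr}; you instead use the transitivity of $\SL_n(\Ik)$ on nonzero vectors of $\Ik^n$ (valid for $n\geq 2$, which the whole setup presupposes) to conjugate $\gamma$ directly to $\epsilon_{1,1}$ before invoking the same lemma. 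Your conjugation formula checks out under the paper's right-action convention ($\alpha^{-1}\gamma\alpha$ is translation by $A^{-1}b$, so $A$ should carry $e_1$ to $b$, as you say), and the determinant normalization is exactly the point that needs $n\geq 2$. Your version buys a slightly stronger conclusion --- every nontrivial translation is actually \emph{conjugate} in $\SA_n(\Ik)$ to $\epsilon_{1,1}$, not merely normally equivalent to it --- at the cost of a small linear-algebra construction; the paper's version stays entirely within the commutator-with-elementaries toolkit it uses throughout, which keeps the proof uniform with the neighboring arguments. Either is acceptable.
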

\begin{proof}
Since $\gamma \in \Tr_n(\Ik)$, we immediately have  $\langle \gamma \rangle ^S \leq \langle \Tr_n (\Ik)\rangle^S$.   By Lemma \ref{lem:tr}, in order to show $\langle \Tr_n(\Ik) \rangle ^S \leq \langle \gamma \rangle ^S$, it suffices to show that $\epsilon _{i,c} \in \langle \gamma \rangle ^S$ for some $1\leq i \leq n$ and $c \in \Ik^*$.  Write $\gamma = (x_1+c_1,\ldots,x_n+c_n)$ for some $c_1,\ldots,c_n \in \K$.   Since $\gamma \neq \id$, there exists $1 \leq j \leq n$ with $c_j \neq 0$.  Let  $1\leq i \leq n$ with $i \neq j$, and compute  
$$\epsilon _{i,c_j}=\gamma ^{-1} \left(\epsilon _{i,x_j}  \gamma \epsilon _{i,x_j} ^{-1}\right) \in \langle \gamma \rangle ^S.$$
Thus  $\langle \epsilon _{i,c_j} \rangle ^S \leq \langle \gamma \rangle ^S$ as required.
\end{proof}

\begin{theorem}\label{thm:Tr=SLIN} If $\Ik \neq \IF_2$, then
$\langle \Tr_n(\Ik)\rangle ^S=\SLIN_n(\Ik)$.
\end{theorem}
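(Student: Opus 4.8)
The plan is to prove the two inclusions $\langle \Tr_n(\Ik)\rangle^S \subseteq \SLIN_n(\Ik)$ and $\SLIN_n(\Ik) \subseteq \langle \Tr_n(\Ik)\rangle^S$ separately, using freely that both groups are normal in $\SA_n(\Ik)$ and that $\SL_n(\Ik)$ is generated by the transvections $\epsilon_{i,cx_j}$ with $i \neq j$. For the first inclusion it is enough, by Lemma~\ref{lem:tr} together with normality of $\SLIN_n(\Ik)$, to place a single nontrivial translation inside $\SLIN_n(\Ik)$; this is the only point where the hypothesis $\Ik \neq \IF_2$ is really used. Since $\Ik \neq \IF_2$ we may choose $b \in \Ik^*$ with $b \neq 1$, and then Lemma~\ref{lem:commutator} with $a = 1$ gives $\epsilon_{i,b-1} = \epsilon_{i,1}^{-1}\delta_{i,j,b}\epsilon_{i,1}\delta_{i,j,b}^{-1} \in \SLIN_n(\Ik)$, because $\delta_{i,j,b} \in \SL_n(\Ik)$ and $\epsilon_{i,1} \in \EA_n(\Ik) \subseteq \SA_n(\Ik)$; and $b-1 \in \Ik^*$.

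For the second inclusion, since $\SLIN_n(\Ik) = \langle \SL_n(\Ik)\rangle^S$ and $N := \langle \Tr_n(\Ik)\rangle^S$ is normal, it suffices to show that every transvection lies in $N$. The mechanism is always to conjugate a translation by a carefully chosen elementary automorphism and then cancel off a residual translation. When $n \geq 3$, for distinct $i,j,k$ a direct bracket computation gives $\epsilon_{i,x_jx_k}^{-1}\epsilon_{k,c}\,\epsilon_{i,x_jx_k} = \epsilon_{i,-cx_j}\,\epsilon_{k,c}$, so $\epsilon_{i,-cx_j} \in N$ for all $c \in \Ik$, and these generate $\SL_n(\Ik)$. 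When $n = 2$ and $\Ik$ has characteristic different from $2$, the same idea works with $x_2^2$ replacing $x_jx_k$: conjugating $\epsilon_{2,c}$ by $\epsilon_{1,x_2^2}$ produces $\epsilon_{2,c}\,\epsilon_{1,-2cx_2 - c^2}$, hence $\epsilon_{1,-2cx_2} \in N$ for every $c$ (here one divides by $2$), and symmetrically $\epsilon_{2,-2cx_1} \in N$; these again generate $\SL_2(\Ik)$. In all these cases $\SLIN_n(\Ik) \subseteq N$ by normality of $N$.

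The remaining case $n = 2$ with $\Ik$ of characteristic $2$ (so $|\Ik| \geq 4$) is the crux, because then the difference $Q(x_2) - Q(x_2+c)$ of a one-variable quadratic $Q$ is a constant and the quadratic trick collapses. Here I would instead conjugate $\epsilon_{2,c}$ by the cubic elementary automorphism $\epsilon_{1,x_2^3}$, which after removing a translation yields $g_c := \epsilon_{1,\,cx_2^2 + c^2x_2} \in N$ for all $c \in \Ik$. Since the automorphisms $\epsilon_{1,f}$ with $f \in \Ik[x_2]$ form an abelian subgroup of $\SA_2(\Ik)$ isomorphic to $(\Ik[x_2],+)$, and since conjugating $g_c$ by the diagonal automorphism $\delta_{1,2,\lambda} \in \SL_2(\Ik)$ gives $\epsilon_{1,\,c\lambda^{-3}x_2^2 + c^2\lambda^{-2}x_2}$, one can cancel the $x_2^2$-terms (comparing $g_c$ with the $\delta_{1,2,\lambda}$-conjugate of $g_{c\lambda^3}$) and extract a genuine transvection $\epsilon_{1,\mu x_2} \in N$ with $\mu = c^2(1 + \lambda^4) \neq 0$ for any $\lambda \neq 1$. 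Finally, as $\SL_2(\Ik)$ is simple for a field of characteristic $2$ with at least four elements and $N \cap \SL_2(\Ik)$ is a normal subgroup of $\SL_2(\Ik)$ containing the nontrivial element $\epsilon_{1,\mu x_2}$, it must equal all of $\SL_2(\Ik)$, whence $\SLIN_2(\Ik) \subseteq N$.

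I expect this last case to be the main obstacle: the transparent conjugate-and-cancel argument available in dimension $\geq 3$, and in dimension $2$ outside characteristic $2$, has no direct analogue for $n = 2$ in characteristic $2$, so one must take the more roundabout route sketched above (the explicit conjugations by $\epsilon_{1,x_2^3}$ and $\delta_{1,2,\lambda}$, followed by an appeal to the simplicity of $\SL_2(\Ik)$). All the other cases reduce to short commutator computations.
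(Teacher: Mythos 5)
Your argument is correct, and it shares the paper's central mechanism --- conjugating a translation by $\epsilon_{1,x_j^2}$ in characteristic $\neq 2$ and by a cubic $\epsilon_{1,x_j^3}$ in characteristic $2$ --- but it departs from the paper's proof at three genuine points. First, for the inclusion $\langle \Tr_n(\Ik)\rangle^S \subseteq \SLIN_n(\Ik)$ you invoke Lemma~\ref{lem:commutator} with $a=1$ to produce $\epsilon_{i,b-1}\in\SLIN_n(\Ik)$, which needs $\Ik\neq\IF_2$; the paper instead extracts $\epsilon_{i,a}=\bigl(\epsilon_{j,1}^{-1}\epsilon_{i,ax_j}\epsilon_{j,1}\bigr)\epsilon_{i,ax_j}^{-1}$ from a transvection, which works over any field --- so your parenthetical claim that this is ``the only point where $\Ik\neq\IF_2$ is really used'' is not quite accurate, since your $n=2$, characteristic-$2$ case also needs $|\Ik|\geq 4$ (as does the paper's). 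Second, your $n\geq 3$ shortcut via $\epsilon_{i,x_jx_k}^{-1}\epsilon_{k,c}\,\epsilon_{i,x_jx_k}=\epsilon_{i,-cx_j}\,\epsilon_{k,c}$ is characteristic-free and is not in the paper, which runs a single argument uniform in $n$ but split by characteristic; your route confines the delicate case to $n=2$. Third, and most substantively, in the crux case ($n=2$, characteristic $2$, $|\Ik|\geq 4$) the paper stays entirely constructive: it adds two cubic conjugates $\epsilon_{1,f_1}$, $\epsilon_{1,f_2}$ with parameters $b$ and $c=b^2/(a-b)$ chosen so that the $x_j^2$-terms cancel and a final conjugation by $\delta_{i,j,c}$ lands exactly on the prescribed transvection $\epsilon_{i,ax_j}$. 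You instead cancel the quadratic part of one $g_c$ against a $\delta_{1,2,\lambda}$-conjugate of $g_{c\lambda^3}$ (your computation $\mu=c^2(1+\lambda^4)=c^2(1+\lambda)^4\neq 0$ for $\lambda\neq 1$ is right), obtaining a single nontrivial transvection, and then appeal to the simplicity of $\SL_2(\Ik)=\mathrm{PSL}_2(\Ik)$ for characteristic-$2$ fields with at least four elements to conclude $\SL_2(\Ik)\subseteq N$. That final appeal is legitimate ($N\cap\SL_2(\Ik)$ is indeed normal in $\SL_2(\Ik)$ and contains a non-central element), and it is a sensible way to avoid having to hit every $a\in\Ik^*$ explicitly --- which is not automatic from one transvection, since conjugating by diagonals only scales the coefficient by fourth powers --- but it imports a nontrivial classical theorem where the paper's explicit two-term cancellation keeps everything elementary and self-contained.
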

\begin{proof}
Since $\SL_n(\Ik)$ is generated by elementary matrices, it suffices to show that $\langle \epsilon _{i,ax_j} \rangle ^S = \langle \Tr_n(\Ik) \rangle ^S$ for any $a \in \Ik^*$, $1 \leq i,j \leq n$ with $i\neq j$.  To see that $\langle \epsilon _{i,ax_j}\rangle ^S \geq \langle \Tr_n(\Ik) \rangle ^S$, we observe 
$$\left(\epsilon _{j,1}^{-1} \epsilon _{i,ax_j}\epsilon _{j,1} \right) \epsilon _{i,ax_j}^{-1} = \epsilon _{i,a}$$
and apply Lemma \ref{lem:tr}.  

The opposite containment is somewhat more delicate.  First, suppose $\Ik$ does not have characteristic two.  Then we compute 
$$\epsilon _{i,ax_j} = \epsilon _{i,-\frac{a^2}{4}}\epsilon _{j,-\frac{a}{2}} \left(\epsilon _{i,x_j^2} \epsilon _{j,\frac{a}{2}} \epsilon _{i,x_j ^2} ^{-1} \right)\in \langle \Tr_n(\Ik)\rangle ^S.$$

Now, assume $\Ik$ has characteristic two.  Since $\Ik \neq \IF_2$ by assumption, choose any $b \in \Ik^*$ with $b \neq a$, and set $c=\frac{b^2}{a-b} \in \Ik^*$.  
\begin{claim}
$$\epsilon _{i,ax_j}= \delta _{i,j,c}^{-1} \left(  \epsilon _{j,-cb^3} \left(\epsilon _{i,cx_j^3} \epsilon _{j,b} \epsilon _{i,cx_j ^3} ^{-1}  \right)  \epsilon _{j,-bc^3} \left(\epsilon _{i,bx_j^3} \epsilon _{j,c} \epsilon _{i,bx_j ^3} ^{-1}  \right)\right) \delta _{i,j,c}.$$
\end{claim}
We first note that the claim implies $\epsilon _{i,ax_j} \in \langle \Tr_n(\Ik)\rangle ^S$, completing the proof.  The claim is established by direct computation: first observe that setting $f_1 = cbx_j^2+cb^2x_j$ and $f_2=bcx_j^2+bc^2 x_j$, we have
\begin{align*}
 \epsilon _{j,-cb^3} \left(\epsilon _{i,cx_j^3} \epsilon _{j,b} \epsilon _{i,cx_j ^3} ^{-1}\right) &= \epsilon _{i,f_1} \\
\epsilon _{j,-bc^3} \left(\epsilon _{i,bx_j^3} \epsilon _{j,c} \epsilon _{i,bx_j ^3} ^{-1}\right)&= \epsilon _{i,f_2}
\end{align*}
Therefore, setting $f_3=f_1+f_2=(cb^2+bc^2)x_j$ (here we are using the characteristic two assumption), we have
$$\left( \epsilon _{j,-cb^3} \epsilon _{i,cx_j^3} \epsilon _{j,b} \epsilon _{i,cx_j ^3} ^{-1}  \right) \left( \epsilon _{j,-bc^3} \epsilon _{i,bx_j^3} \epsilon _{j,c} \epsilon _{i,bx_j ^3} ^{-1}  \right) = \epsilon _{i,f_3}.$$
Finally, we observe $$ \delta _{i,j,c}^{-1} \epsilon _{i,f_3} \delta _{i,j,c} = \epsilon _{i,\left(\frac{b^2}{c}+b\right)x_j}=\epsilon _{i,ax_j}.$$
\end{proof}

\begin{remark}
To our knowledge, it remains an open question whether Theorem \ref{thm:Tr=SLIN} holds over $\IF_2$.
\end{remark}

\begin{corollary}\label{cor:singleAffine}
Let $\Ik$ be a field other than $\IF_2$, and let $\alpha \in \Af_n(\Ik) \cap \SA_n(\Ik)$.  If $\alpha \neq \id$, then $\langle \alpha \rangle ^S = \SLIN_n(\Ik)$.
\end{corollary}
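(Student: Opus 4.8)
The plan is to reduce the statement to Theorem~\ref{thm:Tr=SLIN} by showing that the normal closure $\langle \alpha \rangle^S$ of any nontrivial $\alpha \in \Af_n(\Ik) \cap \SA_n(\Ik)$ contains a nontrivial translation; then Corollary~\ref{cor:tr} gives $\langle \Tr_n(\Ik)\rangle^S \leq \langle \alpha\rangle^S$, and Theorem~\ref{thm:Tr=SLIN} gives $\SLIN_n(\Ik) = \langle \Tr_n(\Ik)\rangle^S \leq \langle \alpha\rangle^S$; the reverse containment $\langle\alpha\rangle^S \leq \SLIN_n(\Ik)$ is automatic since $\alpha \in \SL_n(\Ik)\ltimes\Tr_n(\Ik) \subseteq \SLIN_n(\Ik)$ (note $\SLIN_n(\Ik)$ contains $\SL_n(\Ik)$ by definition and contains all translations by Theorem~\ref{thm:Tr=SLIN}, hence contains the whole special affine group).

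First I would split into two cases according to whether the linear part of $\alpha$ is trivial. If $\alpha \in \Tr_n(\Ik)$ is a nontrivial translation, we are done immediately by Corollary~\ref{cor:tr}. Otherwise, write $\alpha = \ell \gamma$ where $\ell \in \SL_n(\Ik)$ is the (nontrivial) linear part and $\gamma \in \Tr_n(\Ik)$. The key observation is that the commutator of $\alpha$ with a well-chosen translation lands in $\Tr_n(\Ik)$: for $\beta \in \Tr_n(\Ik)$ we have $\alpha^{-1}\beta^{-1}\alpha\beta \in \Tr_n(\Ik)$, since conjugating a translation by any affine map yields a translation, so this commutator is a product of translations. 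Explicitly, if $\beta$ is translation by the vector $v$, then $\alpha^{-1}\beta\alpha$ is translation by $\ell^{-1}v$, and the commutator is translation by $\ell^{-1}v - v = (\ell^{-1} - I)v$. Since $\ell \neq I$, the matrix $\ell^{-1} - I$ is nonzero, so we may choose $v$ with $(\ell^{-1}-I)v \neq 0$, producing a nontrivial translation inside $\langle \alpha \rangle^S$.

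The only subtlety is that this commutator might be trivial for \emph{every} choice of $v$, which happens precisely when $\ell^{-1} = I$, i.e. $\ell = I$ — but that is exactly the excluded first case. So the dichotomy is clean: either $\alpha$ is a nontrivial translation (handled directly), or its linear part is nontrivial and the commutator trick produces a nontrivial translation. In either event $\langle \alpha\rangle^S$ contains a nontrivial element of $\Tr_n(\Ik)$, and the chain of inclusions above closes the argument.

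I do not expect any real obstacle here; the proof is short and the ``hard'' arithmetic content has already been absorbed into Theorem~\ref{thm:Tr=SLIN}. The one point requiring a line of care is verifying the reverse containment $\langle\alpha\rangle^S \subseteq \SLIN_n(\Ik)$, i.e. that $\SLIN_n(\Ik)$ is normal in $\SA_n(\Ik)$ (true by construction as a normal closure) and contains $\alpha$; the latter follows because $\SLIN_n(\Ik) \supseteq \langle\Tr_n(\Ik)\rangle^S \ni$ all translations and $\SLIN_n(\Ik) \supseteq \SL_n(\Ik)$, so it contains the subgroup these generate, which includes $\alpha$.
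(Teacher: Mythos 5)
Your proof is correct and takes essentially the same approach as the paper's: both arguments produce a nontrivial translation in $\langle\alpha\rangle^S$ as a commutator of $\alpha$ with a suitably chosen translation (the paper takes the specific translation $\epsilon_{j,1}$ for a column $j$ where the linear part differs from the identity, whereas you argue abstractly via the nonzero matrix $\ell^{-1}-I$), and then conclude with Corollary~\ref{cor:tr} and Theorem~\ref{thm:Tr=SLIN}. Your justification of the reverse containment $\langle\alpha\rangle^S\leq\SLIN_n(\Ik)$ also matches the paper's.
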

\begin{proof}
First, write $\alpha = \lambda \gamma$ for some $\lambda \in \SL_n(\Ik)$ and $\gamma \in \Tr_n(\Ik)$.  
Note that Theorem \ref{thm:Tr=SLIN} implies that $\gamma \in \SLIN_n(\Ik)$, so we thus have $\langle \alpha \rangle ^S \leq \SLIN_n(\Ik)$.  So we are left to show the opposite containment.

If $\lambda = \id$, then Corollary \ref{cor:tr} and Theorem \ref{thm:Tr=SLIN} show $\langle \alpha \rangle ^S = \SLIN_n(\Ik)$; we thus assume $\lambda \neq \id$.
Write $(x_i)\lambda = a_{i,1}x_1 +\cdots+ a_{i,n}x_n$ for some $a_{i,j} \in \Ik$.  Since $\lambda \neq \id$, there is some $1\leq i,j \leq n$ with $a _{i,j} \neq \delta _{i,j}$.  Then setting $\gamma_0 = \left(\epsilon _{j,1} ^{-1} \alpha \epsilon _{j,1}\right) \alpha ^{-1} \in \langle \alpha \rangle ^S$, one easily computes that
$$\gamma_0 = (x_1+a_{1,j}-\delta _{1,j},  \ldots, x_n+a_{n,j}-\delta _{n,j}) \in \Tr_n(\Ik).$$
Note that $\gamma_0 \neq \id$, and $\langle \alpha \rangle ^S \geq \langle \gamma _0 \rangle ^S = \SLIN_n(\Ik)$ (with the last equality following from Corollary \ref{cor:tr} and Theorem \ref{thm:Tr=SLIN}).
\end{proof}

\begin{theorem}\label{thm:SLIN}
Let $\Ik$ be any field other than $\IF_p$ for a prime $p$.  
Then
$\SLIN_n(\Ik)=\langle \EA_n(\Ik)\rangle^S.$
\end{theorem}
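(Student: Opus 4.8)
The plan is to prove the two inclusions separately. The inclusion $\SLIN_n(\Ik)\subseteq\langle\EA_n(\Ik)\rangle^S$ is immediate: $\SL_n(\Ik)$ is generated by the transvections $\epsilon_{i,cx_j}$ ($i\neq j$, $c\in\Ik$), which are elementary automorphisms, so $\SL_n(\Ik)\subseteq\EA_n(\Ik)$ and hence $\SLIN_n(\Ik)=\langle\SL_n(\Ik)\rangle^S\leq\langle\EA_n(\Ik)\rangle^S$. For the reverse inclusion, since $\EA_n(\Ik)$ is generated by the $\epsilon_{i,f}$ and $\SLIN_n(\Ik)$ is normal in $\SA_n(\Ik)$, it suffices to show $\epsilon_{i,f}\in\SLIN_n(\Ik)$ for every $i$ and every $f\in\Ik[x_1,\dots,x_{i-1},x_{i+1},\dots,x_n]$; and since $\Ik\neq\IF_p$ is in particular $\neq\IF_2$, Theorem~\ref{thm:Tr=SLIN} lets us work with $\langle\Tr_n(\Ik)\rangle^S$ in place of $\SLIN_n(\Ik)$.

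The engine is a commutator identity: for $\phi\in\Ik[x_1,\dots,x_{i-1},x_{i+1},\dots,x_n]$, any $j\neq i$, and any $c\in\Ik$, a direct computation shows that (up to replacing $c$ by $-c$) the commutator of $\epsilon_{i,\phi}$ and $\epsilon_{j,c}$ equals $\epsilon_{i,g}$ with $g=\phi|_{x_j\mapsto x_j+c}-\phi$. Since $\epsilon_{i,\phi}\in\SA_n(\Ik)$ and $\epsilon_{j,c}\in\Tr_n(\Ik)\subseteq\SLIN_n(\Ik)$, this commutator lies in $\SLIN_n(\Ik)$, hence $\epsilon_{i,g}\in\SLIN_n(\Ik)$ for every such $g$. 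Because the $\epsilon_{i,\cdot}$ with $i$ fixed commute and $\epsilon_{i,f_1+f_2}=\epsilon_{i,f_1}\epsilon_{i,f_2}$, it is enough to treat $\epsilon_{i,c\mu}$ for a monomial $\mu=\prod_{j\neq i}x_j^{e_j}$, by induction on $\deg\mu$. The base cases $\deg\mu\leq1$ are covered by $\Tr_n(\Ik)$ and $\SL_n(\Ik)$. If $\mu$ misses some variable $x_j$ with $j\neq i$, apply the identity with $\phi=c\,\mu\,x_j$ and shift $1$: then $g=c\mu$ exactly. If $\mu$ involves every $x_j$ ($j\ne i$) but some $e_j+1$ is a unit of $\Ik$, apply the identity with $\phi=(e_j+1)^{-1}c\,x_j^{e_j+1}\prod_{l\neq j}x_l^{e_l}$ and shift $1$: then $g=c\mu+(\text{terms of total degree}<\deg\mu)$, and the lower-degree part is in $\SLIN_n(\Ik)$ by induction, so $\epsilon_{i,c\mu}\in\SLIN_n(\Ik)$. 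This already finishes the theorem in characteristic zero, and in positive characteristic it settles every monomial that is not divisible by all of $x_1,\dots,x_{i-1},x_{i+1},\dots,x_n$.

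The remaining case — $\mathrm{char}\,\Ik=p>0$ and $\mu$ involving every variable $x_j$ ($j\neq i$) to a degree $e_j\equiv-1\pmod p$ — is where I expect the real work to lie, and where the hypothesis $\Ik\neq\IF_p$ becomes essential. Here the top-degree coefficient produced by the shift identity is always a multiple of $e_j+1\equiv0$, so the commutator identity by itself never recovers $\epsilon_{i,c\mu}$ and only returns elementary automorphisms of strictly smaller degree (or ``shifted'' monomials of the same degree whose exponents are no longer all $\equiv-1$, which are handled by the previous paragraph). The extra input I would use is conjugation by the diagonal special-linear automorphisms $\delta_{i,j,c}$: conjugating $\epsilon_{i,c'\mu}$ by $\delta_{i,j,c}$ multiplies the coefficient by $c^{-(e_j+1)}$, so the set of scalars $c'$ with $\epsilon_{i,c'\mu}\in\SLIN_n(\Ik)$ is an additive subgroup of $\Ik$ — automatically an $\IF_p$-subspace — that is closed under multiplication by $(e_j+1)$-st powers for every $j\neq i$. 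Playing these rescalings against the non-top-degree output of the commutator identity, and against the combined rescalings coming from several variables at once, one should be able to force this subgroup to be all of $\Ik$ precisely because $\Ik$ is strictly larger than $\IF_p$, thereby closing the induction. Carrying this out cleanly and uniformly — in particular for small finite fields such as $\IF_4$, where the available rescalings form a very small subgroup of $\Ik^*$ and one likely has to combine contributions from all the variables — is the technical heart of the proof and the step I expect to be the main obstacle.
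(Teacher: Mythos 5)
Your overall architecture matches the paper's: the easy inclusion via elementary matrices, the reduction to a single monomial $\epsilon_{i,a\mu}$, the induction on $\deg\mu$, and both of your two main devices (the commutator with a translation, which ``integrates'' the monomial when $e_j+1$ is a unit, and conjugation by $\delta_{i,j,c}\in\SL_n(\Ik)$, which rescales the coefficient by $c^{\pm(e_j+1)}$) all appear in the paper's proof. But the case you explicitly leave open is a genuine gap, and the strategy you sketch for it cannot close it. When $\Ik=\IF_q$ with $q=p^s$, $s\ge 2$, and $(q-1)\mid(e_j+1)$ for every $j\neq i$ (which is compatible with $p\mid(e_j+1)$ for every $j$; e.g.\ $\mu=x_2^{5}$ over $\IF_4$), every $c\in\IF_q^*$ satisfies $c^{e_j+1}=1$, so conjugation by $\delta_{i,j,c}$ fixes $\epsilon_{i,c'\mu}$ exactly and your rescalings contribute nothing: the constraint ``closed under multiplication by $(e_j+1)$-st powers'' is vacuous for each $j$ separately, so combining several variables cannot help either. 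This is precisely the sub-case where a new idea is required.

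The paper's resolution (its Case 2(b)) goes \emph{up} in degree rather than sideways: with $d_k=e_k>1$, it chooses $b$ with $b^p\binom{p+d_k}{p}=a$ (possible since $\Ik$ is finite and $\binom{p+d_k}{p}\neq 0$ in $\IF_p$, as $p\nmid d_k$), sets $f=(x_k^pM)\epsilon_{k,b}-x_k^pM$, and observes $\epsilon_{1,f}=\epsilon_{k,-b}\bigl(\epsilon_{1,x_k^pM}\,\epsilon_{k,b}\,\epsilon_{1,x_k^pM}^{-1}\bigr)\in\SLIN_n(\Ik)$. A direct expansion shows $f=aM+(\text{error})$, where each error monomial either has smaller total degree (your induction) or has some exponent $r$ with $(q-1)\nmid(r+1)$, and the latter are handled by the multiplicative rescaling, which \emph{does} work there because one can then pick $c$ with $c^{r+1}\neq 1$. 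The hypothesis $\Ik\neq\IF_p$ enters exactly here: $q>p$ forces $(q-1)\nmid(r+1)$ for $d_k<r\le d_k+p$, not through the size of the group of $(e_j+1)$-st powers. Two smaller points: your ``additive subgroup of good scalars'' needs a seed element to be useful, whereas the sharper statement the paper exploits is that the commutator of $\delta_{i,j,b}\in\SL_n(\Ik)$ with $\epsilon_{1,c\mu}\in\SA_n(\Ik)$ lies in $\SLIN_n(\Ik)$ \emph{unconditionally} and equals $\epsilon_{1,(b^{\pm(e_j+1)}-1)c\mu}$; this disposes of every infinite field (in any characteristic) in one line, without your integration step, which you would otherwise also need to patch for infinite fields of characteristic $p$ with all $e_j\equiv-1\pmod p$.
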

\begin{proof}
Since $\SL_n(\Ik)$ is generated by elementary matrices, we have $\langle \EA_n(\Ik)\rangle ^S~\geq~\SLIN_n(\Ik)$.  For the other containment, it suffices to show that $\epsilon _{k,aM} \in \SLIN_n(\Ik)$ for any monomial $M \in \Ik[x_1,\ldots, \hat{x}_k, \ldots, x_n]$, $a \in \Ik^*$, and $1 \leq k \leq n$ (as $\EA_n(\Ik)$ is generated by elementary automorphisms of this form).  Moreover, conjugating by $(-x_k,x_2,\ldots,x_{k-1},x_1,x_{k+1},\ldots,x_n) \in \SL_n(\Ik)$ allows us to assume further that $k=1$.  Now write $M=x_2^{d_2} \cdots x_n ^{d_n}$ for some $d_2,\ldots,d_n \in \IN$. 

\ \\
\noindent{\bf Case 1: } $\Ik$ is infinite, or $\Ik=\IF_q$ and $(q-1) \nmid (d_i+1)$ for some $2\leq i \leq n$.  

In this case there exists $b \in \Ik^*$  such that $b^{d_i+1} \neq 1$.   Set $c=\frac{a}{1-b^{d_i+1}}$ and compute
$$\epsilon _{1,aM} = \delta _{1,i,b} \epsilon _{1,cM}^{-1} \delta _{i,1,b} \epsilon _{1,cM} \in \SLIN_n(\Ik).$$

\noindent{\bf Case 2: } $\Ik=\IF_q$ for some $q=p^s$, and $(q-1) \mid (d_i+1)$ for each $2 \leq i \leq n$. 

We induct on $\deg M = d_2+\cdots+d_n$.  Note that since $a \in \Ik^*$, $\epsilon _{1,aM} \neq \id$, so Corollary \ref{cor:singleAffine} establishes the base case of $\deg M \leq 1$.  

\noindent{\bf Case 2 (a): } $p \nmid (d_{j}+1)$ for some $2 \leq j \leq n$.  

Setting $g=\frac{a}{d_j+1}\left(x_jM\right)\epsilon _{j,1} -\frac{a}{d_j+1}\left(x_jM\right)-aM$, a straightforward computation shows that $\deg g < \deg M$ and
$$\epsilon _{1,aM} = \epsilon _{1,-g} \epsilon _{j,-1} \left(\epsilon _{1,\frac{a}{d_j+1} x_j M}  \epsilon _{j,1} \epsilon _{1,\frac{a}{d_j+1}x_j M} ^{-1}\right). $$
By the inductive hypothesis, $\epsilon _{1,-g} \in \SLIN_n(\Ik)$, so $\epsilon _{1,aM} \in \SLIN_n(\Ik)$ as well.

\noindent{\bf Case 2 (b): } $p \mid (d_j+1)$ for each $2 \leq j \leq n$.

Let $2 \leq k \leq n$ be such that $d_k>1$.  Note that $p \nmid d_k$, and thus ${p+d_k \choose p} \neq 0$; so since $\Ik$ is finite we can choose $b \in \Ik$ such that $b^p{p+d_k \choose p} = a$.  
Then setting $f = \left(x_k^pM\right)\epsilon _{k,b}-x_k^pM$, we have
$$\epsilon _{1,f} = \epsilon _{k,-b} \left(\epsilon _{1,x_k^pM} \epsilon _{k,b} \epsilon _{1,x_k^pM}^{-1}\right) \in \SLIN_n(\Ik).$$

\begin{claim}
Let $S$ be the set of tuples $(r_2,\ldots,r_n) \in \IN^{n-1}$ satisfying either
\begin{enumerate}
\item $r_{2}+\cdots+r_{n} < \deg M$, or
\item $(q-1) \nmid (r_k+1)$.
\end{enumerate}
Then $$f=aM + \sum _{(r_2,\ldots,r_n) \in S} c_{r_2,\ldots,r_n} x_2^{r_2}\cdots x_n^{r_n}$$ for some $c_{r_2,\ldots,r_n} \in \Ik$.
\end{claim}
\begin{proof}
It is straightforward to compute that
$$f=\left(\frac{M}{x_k^{d_k}}\right) \sum _{i=1} ^{d_k+p} {d_k+p \choose i} b^i x_k ^{d_k+p-i}.$$
Note that $\frac{M}{x_k^{d_k}} \in \Ik[x_2,\ldots,\hat{x}_k,\ldots,x_n]$ and that $\deg _{x_j} \left( \frac{M}{x_k^{d_k}}\right)=d_j$ for $j \neq 1,k$.  
By assumption $d_k+1 \equiv 0 \pmod {(q-1)}$; since $q>p$ (by hypothesis), we thus have $r+1 \not \equiv 0 \pmod{(q-1)}$ for any $d_k<r\leq d_k+p$.
\end{proof}
By the induction hypothesis and Case 1 above, we have $\epsilon _{1,f-aM} \in \SLIN_n(\Ik)$.  Thus $\epsilon _{1,aM} = \epsilon _{1,f} \epsilon _{1,f-aM}^{-1} \in \SLIN_n(\Ik)$ as required.

\end{proof}

The analogous statement for $\GLIN_n(\Ik)$ is due to Maubach and Poloni.
\begin{theorem}[\cite{Maubach-Poloni}, Corollary 4.4] \label{thm:GLIN}
Let $\Ik$ be any field other than $\IF_2$. Then 
$$\GLIN_n(\Ik)=\langle \GL_n(\Ik)\rangle^G = \langle \TA_n(\Ik)\rangle ^G.$$
\end{theorem}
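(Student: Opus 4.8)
The plan is to observe first that $\GLIN_n(\Ik) = \langle \GL_n(\Ik)\rangle^G$ is just the definition of $\GLIN_n(\Ik)$, so the only content of the theorem is the equality $\langle \GL_n(\Ik)\rangle^G = \langle \TA_n(\Ik)\rangle^G$. The inclusion $\langle \GL_n(\Ik)\rangle^G \subseteq \langle \TA_n(\Ik)\rangle^G$ is free, since $\GL_n(\Ik) \subseteq \TA_n(\Ik)$. For the reverse inclusion, since $\TA_n(\Ik) = \langle \EA_n(\Ik), \GL_n(\Ik)\rangle$, it suffices to show $\EA_n(\Ik) \subseteq \langle \GL_n(\Ik)\rangle^G$: this gives $\TA_n(\Ik) \subseteq \langle \GL_n(\Ik)\rangle^G$, hence $\langle \TA_n(\Ik)\rangle^G \subseteq \langle \GL_n(\Ik)\rangle^G$.

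The main step is a single commutator identity involving a non-special scalar dilation. Fix $c \in \Ik^*$ and $g \in \Ik[x_2,\ldots,x_n]$. A direct computation in $\Ik^{[n]}$ (of the same kind as Lemma \ref{lem:commutator}, but with $\delta_{1,c}$ in place of $\delta_{i,j,b}$, and noting that $g$ is unaffected by $\delta_{1,c}$) shows
$$\epsilon_{1,g}^{-1}\,\delta_{1,c}\,\epsilon_{1,g}\,\delta_{1,c}^{-1} = \epsilon_{1,(c-1)g}.$$
The left-hand side is the product of the $\GA_n(\Ik)$-conjugate $\epsilon_{1,g}^{-1}\delta_{1,c}\epsilon_{1,g}$ of the linear map $\delta_{1,c}$ with $\delta_{1,c}^{-1} \in \GL_n(\Ik)$, so it lies in $\langle \GL_n(\Ik)\rangle^G$. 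This is precisely where the hypothesis $\Ik \neq \IF_2$ enters: choosing $c \in \Ik^* \setminus \{1\}$ makes $c - 1$ a unit, so $(c-1)g$ runs over all of $\Ik[x_2,\ldots,x_n]$ as $g$ does. Hence $\epsilon_{1,h} \in \langle \GL_n(\Ik)\rangle^G$ for every $h \in \Ik[x_2,\ldots,x_n]$, and conjugating by the coordinate-permutation matrices of $\GL_n(\Ik)$ propagates this to $\epsilon_{i,h} \in \langle \GL_n(\Ik)\rangle^G$ for every $i$ and every admissible $h$. Since these automorphisms generate $\EA_n(\Ik)$, the argument is complete.

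I do not anticipate a genuine obstacle here; the point worth flagging is the conceptual contrast with the $\SLIN_n(\Ik)$ analogue (Theorem \ref{thm:SLIN}). There, being confined to volume-preserving maps forces one to synthesize dilation-type behaviour out of the $\delta_{i,j,c}$ together with elementaries, and it is exactly the constraint $c\cdot c^{-1}=1$ that produces the delicate characteristic- and cardinality-dependent case analysis. Here, by contrast, one may use the honest dilation $\delta_{1,c}$ directly, and the entire theorem collapses to the one commutator above. Accordingly $\Ik \neq \IF_2$ is used at exactly one point --- to supply a unit $c \neq 1$ --- and with only $c = 1$ available the identity degenerates to $\id = \id$, consistent with the failure of the statement over $\IF_2$ observed by Maubach and Willems \cite{Maubach-Willems}.
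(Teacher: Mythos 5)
Your proof is correct and is essentially the argument the paper itself indicates (following Maubach--Poloni): conjugating a non-special dilation $\delta_{1,c}$ by an elementary and multiplying by $\delta_{1,c}^{-1}$ produces $\epsilon_{1,(c-1)g}$, which puts all of $\EA_n(\Ik)$ into $\langle \GL_n(\Ik)\rangle^G$. Your version with a general unit $c\neq 1$ is in fact the right level of generality, since the paper's displayed instance uses $c=2$ and so, read literally, only covers characteristic different from $2$; your closing remark contrasting this with the $\SLIN_n(\Ik)$ situation matches the paper's own discussion.
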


A few words are in order about the differences in these two statements.  The proof of Theorem \ref{thm:GLIN} is quite simple, namely the observation that, for any monomial $M \in \Ik[x_2,\ldots,x_n]$ and $a \in \Ik^*$, $\epsilon _{1,aM}=\delta _{1,2} ^{-1} (\epsilon _{1,2aM}^{-1} \delta _{1,2} \epsilon _{1,2aM}) \in \GLIN_n(\Ik)$.  However, $\delta _{1,2} \notin \SL_n(\Ik)$, so this approach had to be adapted (see Case 1 above), resulting in additional technical cases.

We note that Theorem \ref{thm:GLIN} shows $\GLIN_n(\IF_p) = \langle \TA_n(\IF_p) \rangle ^G$; however, it remains (to our knowledge) an open question whether $\SLIN_n(\IF_p) = \langle \EA_n(\IF_p)\rangle ^S$.  In particular, the simplest example where our proof of Theorem \ref{thm:SLIN} breaks down for $\IF_p$ prompts us to ask
\begin{question} Is $(x_1+x_2^5, x_2) \in \SLIN_2(\IF_3)$?
\end{question}

We conclude this section with two frequently used, but simple, observations.
\begin{lemma}\label{reductionlemma}
Let $\phi, \theta \in \SA_n(\Ik)$.  If $\phi$ is normally co-tame and $\phi \in \langle \theta \rangle ^S$, then $\theta$ is normally \cotame.
\end{lemma}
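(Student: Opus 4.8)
The statement is essentially a formal consequence of the definitions, so the plan is to unwind them carefully. Recall that $\phi$ being normally co-tame means $\langle \phi \rangle^S \geq \SLIN_n(\Ik)$, and we are given $\phi \in \langle \theta \rangle^S$. The goal is to show $\langle \theta \rangle^S \geq \SLIN_n(\Ik)$. The key observation is that $\langle \cdot \rangle^S$ is a closure operator on subsets of $\SA_n(\Ik)$: it is monotone (if $H_1 \subseteq H_2$ then $\langle H_1 \rangle^S \subseteq \langle H_2 \rangle^S$) and idempotent ($\langle \langle H \rangle^S \rangle^S = \langle H \rangle^S$, since $\langle H \rangle^S$ is already a normal subgroup of $\SA_n(\Ik)$).

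The chain of containments I would write is the following. From $\phi \in \langle \theta \rangle^S$ we get $\{\phi\} \subseteq \langle \theta \rangle^S$, hence by monotonicity $\langle \phi \rangle^S \subseteq \langle \langle \theta \rangle^S \rangle^S = \langle \theta \rangle^S$, the last equality by idempotence. Combining with the hypothesis $\SLIN_n(\Ik) \leq \langle \phi \rangle^S$ gives $\SLIN_n(\Ik) \leq \langle \phi \rangle^S \leq \langle \theta \rangle^S$, which is exactly the assertion that $\theta$ is normally co-tame.

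There is really no obstacle here beyond making sure the two properties of $\langle \cdot \rangle^S$ are stated (monotonicity is immediate, since $\langle H_1 \rangle^S$ is one of the normal subgroups of $\SA_n(\Ik)$ whose intersection defines $\langle H_2 \rangle^S$ when $H_1 \subseteq H_2$; idempotence holds because a normal subgroup equals the normal subgroup it generates). The only point to double-check is that $\SLIN_n(\Ik) \subseteq \SA_n(\Ik)$ so that all of these objects live in the same ambient group, which is clear since $\SL_n(\Ik) \subseteq \SA_n(\Ik)$ and $\SA_n(\Ik)$ is a group. I expect this lemma to occupy only a few lines in the final paper.
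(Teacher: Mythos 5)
Your proof is correct and is exactly the paper's argument: the paper's entire proof is the one-line chain $\langle \theta \rangle ^S \supseteq \langle \phi \rangle ^S \supseteq \SLIN_n(\Ik)$, which you have simply spelled out via monotonicity and idempotence of the normal-closure operator. No issues.
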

\begin{proof}
$\langle \theta \rangle ^S \supset  \langle \phi \rangle ^S \supset \langle \EA_n(\Ik) \rangle ^S$.
\end{proof}

Combining this with Theorem \ref{thm:SLIN}, we can thus characterize $\SLIN_n(\Ik)$ as the normal subgroup generated by a single automorphism that is both tame and normally co-tame.
\begin{corollary}\label{cotame=} Let $\Ik$ be any field other than $\IF_p$ for a prime $p$.  If $\phi \in \SA_n(\Ik)$ is both tame and normally co-tame, then $\langle \phi \rangle ^S = \SLIN_n(\Ik)$.
\end{corollary}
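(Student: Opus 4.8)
The plan is to establish the two inclusions $\langle \phi \rangle^S \supseteq \SLIN_n(\Ik)$ and $\langle \phi \rangle^S \subseteq \SLIN_n(\Ik)$ separately; both turn out to be formal consequences of Theorem~\ref{thm:SLIN} together with the definitions already in hand.

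First I would dispense with $\langle \phi \rangle^S \supseteq \SLIN_n(\Ik)$, which is immediate: by hypothesis $\phi$ is normally co-tame, and this says exactly that $\langle \phi \rangle^S \geq \SLIN_n(\Ik)$. Note that neither the tameness of $\phi$ nor the restriction $\Ik \neq \IF_p$ plays any role in this direction.

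For the reverse inclusion, the key point is that $\SLIN_n(\Ik) = \langle \SL_n(\Ik)\rangle^S$ is by construction a normal subgroup of $\SA_n(\Ik)$; hence, since $\langle \phi \rangle^S$ is the \emph{smallest} normal subgroup of $\SA_n(\Ik)$ containing $\phi$, it suffices to verify that the single element $\phi$ already lies in $\SLIN_n(\Ik)$. To see this I would use that $\phi$ is both tame and special, so $\phi \in \TA_n(\Ik) \cap \SA_n(\Ik) = \EA_n(\Ik)$, and then invoke Theorem~\ref{thm:SLIN}, which applies precisely because $\Ik \neq \IF_p$, to conclude $\phi \in \EA_n(\Ik) \subseteq \langle \EA_n(\Ik)\rangle^S = \SLIN_n(\Ik)$.

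I do not expect a real obstacle here: all the substance has been absorbed into Theorem~\ref{thm:SLIN} and into the identity $\TA_n(\Ik) \cap \SA_n(\Ik) = \EA_n(\Ik)$. The only thing to be careful about is the bookkeeping of which hypothesis buys which inclusion — "normally co-tame" yields $\supseteq$ for free, whereas "tame" combined with the characterization of $\SLIN_n(\Ik)$ yields $\subseteq$ — so that one does not assert the equality before both halves are in place.
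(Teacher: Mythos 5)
Your argument is correct and is exactly the reasoning the paper intends (the corollary is stated there without a written-out proof, as a combination of the definition of normally co-tame with the identity $\TA_n(\Ik)\cap\SA_n(\Ik)=\EA_n(\Ik)$ and Theorem~\ref{thm:SLIN}). Your bookkeeping of which hypothesis yields which inclusion — normal co-tameness for $\supseteq$, tameness plus $\Ik\neq\IF_p$ for $\subseteq$ — matches the paper precisely.
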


\section{Main Results}
Throughout this section, we assume that $\Ik$ is a field of characteristic zero  The goal of this section is to prove

\begin{theorem}\label{thm:main}
Let $\Ik$ be a field of characteristic zero.  If $\theta \in \SA_n(\Ik)$ is $m$-triangular for some $m \leq 4$ and $\theta \neq \id$, then $\theta$ is normally co-tame.
\end{theorem}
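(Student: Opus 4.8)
The strategy is to reduce the problem, in stages, to the two cases we already control: a nontrivial element of $\Af_n(\Ik) \cap \SA_n(\Ik)$ (handled by Corollary \ref{cor:singleAffine}) and a nontrivial translation (handled by Corollary \ref{cor:tr} together with Theorem \ref{thm:Tr=SLIN}). By Lemma \ref{reductionlemma}, it suffices in each case to produce \emph{some} nontrivial affine special automorphism, or some nontrivial translation, inside $\langle \theta \rangle^S$. The natural reduction is on the number $m$ of triangular factors. First I would dispose of $m=0$ (then $\theta$ is itself a nontrivial element of $\Af_n \cap \SA_n$, so Corollary \ref{cor:singleAffine} applies directly). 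For the inductive/structural step, the key observation is that if $\theta$ is $m$-triangular and $\gamma \in \Tr_n(\Ik)$, then the commutator $[\gamma,\theta] = \gamma^{-1}\theta^{-1}\gamma\theta$ lies in $\langle \theta \rangle^S$, and one can try to choose $\gamma$ so that $[\gamma,\theta]$ is again $(m')$-triangular with $m' < m$ — using that conjugating a triangular map by a translation can lower its vector degree (Lemma \ref{lem:vdreduce}). Iterating, one hopes to land on a $1$- or $0$-triangular element which can be analyzed by hand.

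More concretely, write $\theta = \alpha_0 \tau_1 \alpha_1 \cdots \tau_m \alpha_m$. Conjugating $\theta$ by an element of $\Af_n(\Ik)$ does not change whether it is normally co-tame (since $\langle \theta \rangle^S$ is normal), so I may assume $\alpha_0 = \id$ and, after absorbing, normalize the outer affine factors; one may further conjugate each $\tau_i$ to put it in a convenient normal form. The heart of the argument is: given a translation $\gamma$, the element $\theta^{-1}\gamma\theta$ is a translation conjugated through the word $\alpha_m^{-1}\tau_m^{-1}\cdots\tau_1^{-1}$; by Lemma \ref{lem:vdreduce}(2), passing $\gamma$ through the \emph{innermost} nontrivial triangular factor strictly decreases its vector degree, and passing through affine factors keeps it affine. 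So $\gamma^{-1}(\theta^{-1}\gamma\theta) \in \langle\theta\rangle^S$ can be arranged to be a product of affine maps and strictly fewer than $m$ triangular maps (with controlled vector degrees), \emph{provided} it is not the identity. The recursion bottoms out either at a nontrivial affine special automorphism (apply Corollary \ref{cor:singleAffine}) or at a nontrivial triangular element that is already a translation (apply Corollary \ref{cor:tr}); the constraint $m \le 4$ is what keeps the bookkeeping finite and guarantees we can always make a degree-reducing choice of $\gamma$ without the commutator collapsing.

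The main obstacle I anticipate is the degeneracy analysis: controlling exactly when the commutator $[\gamma,\theta]$ (or the successive commutators) becomes trivial, or fails to be genuinely $(m-1)$-triangular, so that the induction actually terminates in the desired place rather than stalling. This is precisely where the $4$-triangular hypothesis must be used sharply — for larger $m$ the analogous reduction fails (consistent with the Furter--Lamy examples of $m$-triangular maps generating proper normal subgroups for $m \ge 7$ in dimension $2$). So I would expect the proof to split into a case analysis on the vector degrees $\vd(\tau_i)$ and on whether certain $\tau_i$ or composites $\tau_i\alpha_i\tau_{i+1}$ lie in $\Df_n(\Ik)$; in the worst "balanced" case (each $\tau_i$ with nontrivial vector degree, no collapsing), one must exhibit by an explicit computation a single well-chosen translation $\gamma$ (or a short word of translations and affine conjugators) whose commutator with $\theta$ is a nontrivial $3$-triangular map, reducing to the $m \le 3$ case. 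Handling that explicit computation — likely using Taylor expansion as in the proof of Lemma \ref{lem:vdreduce} to track the leading term — is the technical crux.
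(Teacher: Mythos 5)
Your high-level strategy --- commutate $\theta$ with well-chosen translations, use Lemma \ref{lem:vdreduce} to drive down vector degrees, and bottom out in Corollary \ref{cor:singleAffine} and Corollary \ref{cor:tr} --- is the paper's strategy, and it does carry the cases $m \leq 3$ essentially as you describe. But the two points you defer are exactly where the real content lies, and as stated your plan does not close. First, the degeneracy issue cannot be waved at: you need a concrete exit when $\gamma^{-1}\phi^{-1}\gamma\phi = \id$ for \emph{every} translation $\gamma$ in the relevant conjugacy class, and the answer is not that $\phi$ is affine or a translation. The paper's Lemma \ref{lem:conjugateparabolic} shows (using characteristic zero and Taylor expansion) that if the commutator with $\alpha\epsilon_{k,c}\alpha^{-1}$ vanishes for all $c$, then $\phi$ is $\SL_n(\Ik)$-conjugate to a \emph{parabolic} automorphism, and Corollary \ref{cor:parabolic} handles parabolic maps separately by commutating with $\epsilon_{n,x_i}$ to manufacture a nontrivial elementary $\epsilon_{n,H_i-x_i}$. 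Without this parabolic escape route (packaged as Corollary \ref{cor:choosec}) the induction genuinely stalls.

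Second, and more seriously, your proposed endgame for $m=4$ --- find a translation whose commutator with $\theta$ is a nontrivial $3$-triangular map --- fails on a count. If $\phi = \tau_1\alpha_1\tau_2\alpha_2\tau_3\alpha_3\tau_4\alpha_4$ and $\gamma$ is chosen so that it passes through $\tau_1$ as a translation and collapses into $\tau_2$, then $\gamma^{-1}\phi^{-1}\gamma\phi$ has the form $(\gamma^{-1}\tau_4^{-1})\alpha_3^{-1}\tau_3^{-1}\alpha_2^{-1}\tilde{\tau}_2\alpha_2\tau_3\alpha_3\tau_4\alpha_4$: the outer factors $\tau_3,\tau_4$ each reappear on both sides of the word, so the commutator is generically $5$-triangular, not $3$-triangular. (For $m=3$ only $\tau_3$ duplicates and one copy absorbs the stray translation, which is why the count stays at $3$ and the induction on $\vd(\tau_2)$ works there.) The paper's fix is a symmetrization step: one preliminary commutator, with $\gamma$ conjugated by the \emph{outermost} affine factor so as to kill $\tau_4$, replaces $\phi$ by a map of the palindromic form $\tau_1\alpha_1\tau_2\alpha_2\tau_3\alpha_2^{-1}\tau_2^{-1}\alpha_1^{-1}$; subsequent commutators with $\alpha_1\epsilon_{n,c}\alpha_1^{-1}$ preserve this form while strictly reducing $\vd(\tau_3)$, and only when $\vd(\tau_3)=(0,\ldots,0)$ does the map degenerate to the $3$-triangular case. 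Some such self-cancellation device is indispensable at $m=4$; a direct induction on the number of triangular factors, as you propose, increases that number rather than decreasing it.
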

\begin{proof}
This follows from Theorems \ref{thm:triangular}, \ref{thm:bitriangular}, \ref{thm:3triangular}, and \ref{thm:4triangular} below.
\end{proof}

We begin with two lemmas for handling degenerate cases (cf. the  translation degenerate maps introduced in \cite{Edo-Lewis17}).  The first one is easy to check, but is also a consequence of Lemma \ref{lem:conjugateparabolic}.  We emphasize to the reader that this is the only place we rely on the assumption of $\Ik$ having characteristic zero.
\begin{lemma}\label{lem:noid}
Let $\phi \in \GA_n(\Ik)$.  Then $\epsilon ^{-1} \phi ^{-1} \epsilon \phi = \id$ for every $\epsilon \in \Tr_n(\Ik)$ if and only if $\phi \in \Tr_n(\Ik)$. 
\end{lemma}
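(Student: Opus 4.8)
The plan is to translate the commutator condition into a system of polynomial identities in the coordinates of $\phi$ and then read off the structure of $\phi$ directly. The backward implication is immediate: the translation group is abelian, so if $\phi \in \Tr_n(\Ik)$ then $\epsilon^{-1}\phi^{-1}\epsilon\phi = \id$ for every $\epsilon \in \Tr_n(\Ik)$.

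For the forward implication, write $\phi = (F_1,\ldots,F_n)$ with $F_i \in \Ik^{[n]}$, and fix an arbitrary translation $\epsilon = (x_1+b_1,\ldots,x_n+b_n)$. Using the right-action composition convention, one computes $(x_i)\epsilon\phi = F_i + b_i$ and $(x_i)\phi\epsilon = F_i(x_1+b_1,\ldots,x_n+b_n)$. Since $\epsilon^{-1}\phi^{-1}\epsilon\phi = \id$ is equivalent to $\epsilon\phi = \phi\epsilon$, and an element of $\GA_n(\Ik)$ is determined by the images of the $x_i$, the hypothesis becomes the family of identities
$$F_i(x_1+b_1,\ldots,x_n+b_n) = F_i(x_1,\ldots,x_n) + b_i \qquad (1 \leq i \leq n),$$
holding for every $(b_1,\ldots,b_n) \in \Ik^n$.

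It then remains to conclude that each $F_i$ has the form $x_i + c_i$ with $c_i \in \Ik$, which will finish the proof since then $\phi = (x_1+c_1,\ldots,x_n+c_n) \in \Tr_n(\Ik)$. This is the step that uses characteristic zero: differentiating the $i$-th identity with respect to $b_j$ and setting all $b_k=0$ yields $\partial F_i/\partial x_j = \delta_{i,j}$ for every $j$, and over a field of characteristic zero a polynomial whose partial derivatives all vanish is constant, so $F_i - x_i$ is a constant $c_i$. (Alternatively, as characteristic zero forces $\Ik$ infinite, one may instead specialize $x_1 = \cdots = x_n = 0$ in the identities to get $F_i(b_1,\ldots,b_n) = F_i(0,\ldots,0) + b_i$ for all $b \in \Ik^n$, which forces the polynomial identity $F_i = x_i + F_i(0,\ldots,0)$.)

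There is no real obstacle here; the only points requiring care are the bookkeeping of the composition convention so that the commutator relation is turned into the correct system of identities, and pinning down exactly where the characteristic-zero hypothesis is invoked, since the surrounding text advertises this lemma as the sole place it is needed.
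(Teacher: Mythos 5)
Your proof is correct. The paper does not actually write out a proof of this lemma --- it is dismissed as ``easy to check'' and noted to follow from Lemma \ref{lem:conjugateparabolic} --- and your direct verification is precisely the computation one extracts from the proof of that lemma (there, $F_i(z) = H_i(x+ze_k)-H_i(x)$ is expanded and forced to equal $\delta_{i,k}z$), specialized to translations in every coordinate direction. One small point of care: in your first argument, before differentiating the identity $F_i(x+b)=F_i(x)+b_i$ with respect to $b_j$ you must promote it from ``holds for every $b\in\Ik^n$'' to a polynomial identity in $b$, which already uses that $\Ik$ is infinite; you invoke infiniteness only in the parenthetical alternative, but the same remark applies to both routes. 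Your alternative argument (specialize $x=0$) is in fact the cleaner one and shows that this particular lemma needs only that $\Ik$ be infinite, not characteristic zero --- the characteristic-zero hypothesis is genuinely needed only for the one-parameter version in Lemma \ref{lem:conjugateparabolic}, where commuting with a single family $\epsilon_{k,c}$ does not determine $H_i$ off the $x_k$-direction and one must rule out terms like $x_k^p$ via the derivative condition $\partial H_i/\partial x_k=\delta_{i,k}$.
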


\begin{lemma}\label{lem:conjugateparabolic}
Let $\phi \in \GA_n(\Ik)$, and let $\alpha \in \GL_n(\Ik)$.  Fix $1 \leq k \leq n$, and for any $c \in \Ik$, set $\gamma _c = \alpha \epsilon _{k,c} \alpha ^{-1}$.  If $\gamma _c ^{-1} \phi ^{-1} \gamma _c \phi = \id$ for every $c \in \Ik$, then $\lambda \phi \lambda^{-1}\in \PA_n(\Ik)$  for some $\lambda \in \SL_n(\Ik)$.
\end{lemma}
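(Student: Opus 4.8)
\textit{Plan.} The plan is to observe that the family $\{\gamma_c\}_{c\in\Ik}$ consists of translations, conjugate $\phi$ by a suitable element of $\SL_n(\Ik)$ to bring this family into a standard form, and then read off the parabolic shape directly. For the first step, write $\alpha$ as the linear automorphism with $(x_i)\alpha=\sum_j a_{ij}x_j$; a short computation gives $(x_i)\gamma_c=x_i+c\,a_{ik}$, so $\gamma_c$ is the translation by $cv$, where $v=(a_{1k},\ldots,a_{nk})$ is the $k$-th column of the matrix of $\alpha$, and $v\neq 0$ since $\alpha\in\GL_n(\Ik)$. Also, the hypothesis $\gamma_c^{-1}\phi^{-1}\gamma_c\phi=\id$ is exactly the statement that $\phi$ commutes with $\gamma_c$, for every $c\in\Ik$.

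Next I would pick $\lambda\in\SL_n(\Ik)$ whose matrix $L$ satisfies $Lv=e_n$. Such a $\lambda$ exists when $n\geq 2$ because $\SL_n(\Ik)$ acts transitively on nonzero vectors of $\Ik^n$: choose any $M\in\GL_n(\Ik)$ with $Mv=e_n$, and replace $M$ by $\mathrm{diag}\big((\det M)^{-1},1,\ldots,1\big)M$, which still sends $v$ to $e_n$ (here $n\geq 2$ is used) but now has determinant $1$. (If $n=1$ there is nothing to prove, as $\GA_1(\Ik)=\PA_1(\Ik)$.) Conjugating the commutation relation by $\lambda$, and using the same computation as in the first step to see that $\lambda\gamma_c\lambda^{-1}$ is the translation by $c(Lv)=ce_n$, we get that $\psi:=\lambda\phi\lambda^{-1}$ commutes with $\epsilon_{n,c}$ for every $c\in\Ik$.

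To finish, write $\psi=(f_1,\ldots,f_n)$ and compare the two sides of $\psi\epsilon_{n,c}=\epsilon_{n,c}\psi$ coordinate by coordinate: this says $f_i(x_1,\ldots,x_{n-1},x_n+c)=f_i$ for $i<n$ and $f_n(x_1,\ldots,x_{n-1},x_n+c)=f_n+c$, for all $c\in\Ik$. Since $\Ik$ is infinite (being of characteristic zero), each of these polynomial identities remains valid after replacing $c$ by an indeterminate $t$; substituting $t=-x_n$ then yields $f_i=f_i(x_1,\ldots,x_{n-1},0)\in\Ik[x_1,\ldots,x_{n-1}]$ for $i<n$, and $f_n=x_n+f_n(x_1,\ldots,x_{n-1},0)$. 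Hence $\lambda\phi\lambda^{-1}=\psi\in\PA_n(\Ik)$ (in fact with $a_n=1$), as desired.

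I do not expect a serious obstacle here — the argument is essentially a normalization — but three points need care: insisting that $\lambda$ lie in $\SL_n(\Ik)$ rather than merely in $\GL_n(\Ik)$ (handled by the $\mathrm{diag}$ correction, which is where the hypothesis $n\geq 2$ surfaces); keeping the left/right composition conventions straight when conjugating the commutator identity and when computing $\lambda\gamma_c\lambda^{-1}$; and noting that the characteristic zero hypothesis enters only through the infiniteness of $\Ik$, which is what licenses the passage from ``holds for all $c\in\Ik$'' to a polynomial identity in the auxiliary variable $t$ (this is the same phenomenon that, via Lemma~\ref{lem:conjugateparabolic}, underlies Lemma~\ref{lem:noid}).
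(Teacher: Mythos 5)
Your proof is correct and follows essentially the same route as the paper's: recognize that the hypothesis says $\phi$ commutes with the one-parameter family of translations $\gamma_c$, normalize that family to $\epsilon_{n,c}$ by a linear conjugation, and read off the parabolic form coordinate by coordinate using that $\Ik$ is infinite. The only differences are in execution: you normalize in one step via $\lambda$ with $Lv=e_n$ (rather than conjugating by $\alpha$ first and by a signed permutation at the end), which keeps the conjugator visibly in $\SL_n(\Ik)$ throughout, and your substitution $t=-x_n$ replaces the paper's binomial expansion, making explicit that only the infiniteness of $\Ik$ --- not characteristic zero per se --- is what the argument uses.
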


\begin{proof}
First, we note that we may assume $\alpha = \id$, as $$\gamma _c ^{-1} \phi ^{-1} \gamma _c \phi = \alpha \left( \epsilon _{k,c}^{-1} (\alpha ^{-1} \phi ^{-1} \alpha) \epsilon _{k,c} (\alpha ^{-1} \phi \alpha )\right) \alpha ^{-1}.$$
So we now have $\gamma _c = \epsilon _{k,c}$, and note that after writing $\phi = (H_1,\ldots,H_n)$, we can rewrite the assumption $\epsilon _{k,c} ^{-1} \phi ^{-1} \epsilon_{k,c} \phi = \id$ as
\begin{equation}
H_i(x_1,\ldots,x_{k-1},x_k+c,x_{k+1},\ldots,x_n) = H_i(x_1,\ldots,x_n) + \delta _{i,k} c \label{eq1}
\end{equation}
where $\delta _{i,k}$ is the Kroenecker delta.    Write $H_i = \sum _{j=0} ^d P_{i,j} x_k ^j$ for some $P_{i,j} \in \Ik[\hat{x}_k]$, and let $F_i(z)= H_i(x_1,\ldots,x_{k-1},x_k+z,x_{k+1},\ldots,x_n) - H_i(x_1,\ldots,x_n) \in \Ik^{[n]}[z]$.  Then we compute
\begin{align*}
F_i(z) &= \sum _{j=0} ^d P_{i,j} (x_k+z)^j -\sum _{j=0} ^d P_{i,j} x_k ^j\\
&= \sum _{j=0} ^d P_{i,j} \sum _{m=1} ^j {j \choose m} x_k ^{j-m} z^m \\
&= \sum _{m=1} ^d z^m \sum _{r=0} ^{d-m} {r+m \choose m} P_{i,m+r} x_k ^r.
\end{align*}

Note that \eqref{eq1} implies $F_i(c) = \delta _{i,k}c$ for all $c \in \Ik$; since $\Ik$ is infinite, we must have $F_i(z)=\delta _{i,k}z$ as polynomials, and since $\Ik$ has characteristic zero, we must have $d=1$ and $P_{i,1}=\delta _{i,k}$, which implies that $H_i - \delta _{i,k} x_k  \in \Ik[\hat{x}_k]$.  Letting $$\pi = (-x_1,x_2,\ldots,x_n)(x_1,\ldots,x_{k-1},x_n,x_{k+1},\ldots,x_{n-1},x_k) \in \SL_n(\Ik),$$ we then have $\pi \phi \pi ^{-1} \in \PA_n(\Ik)$.
\end{proof}

Before continuing on to triangular automorphisms, we remark that by Corollary \ref{cotame=}, in the proofs of Theorems \ref{thm:triangular}, \ref{thm:bitriangular}, \ref{thm:3triangular}, and \ref{thm:4triangular},  since the classes of interest are tame automorphisms, it suffices to show that the respective maps are normally co-tame.
\begin{theorem}\label{thm:triangular}
Let $\tau \in \SA_n(\Ik) \cap \BA_n(\Ik)$.  If $\tau \neq \id$, then  $\langle \tau \rangle ^S = \SLIN_n(\Ik)$, and in particular $\tau$ is normally co-tame.
\end{theorem}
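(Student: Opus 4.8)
Since $\BA_n(\Ik)\subseteq\TA_n(\Ik)$, the automorphism $\tau$ is tame, so by Corollary~\ref{cotame=} it suffices to prove that $\tau$ is normally co-tame, i.e. that $\SLIN_n(\Ik)\subseteq\langle\tau\rangle^S$; this then yields the stated equality $\langle\tau\rangle^S=\SLIN_n(\Ik)$. My plan is to run a descent on the vector degree inside $\langle\tau\rangle^S$ until I reach an element of $\Df_n(\Ik)$, at which point Corollary~\ref{cor:singleAffine} takes over (it applies because $\Ik$ has characteristic zero, so $\Ik\neq\IF_2$): concretely, I will produce a nontrivial $\tau_N\in\Df_n(\Ik)\cap\SA_n(\Ik)$ lying in $\langle\tau\rangle^S$, and then $\SLIN_n(\Ik)=\langle\tau_N\rangle^S\subseteq\langle\tau\rangle^S$ finishes the argument.

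For the descent, set $\tau_0=\tau$, and suppose I have produced $\tau_k\in\BA_n(\Ik)\cap\SA_n(\Ik)$ with $\tau_k\neq\id$ and $\tau_k\in\langle\tau\rangle^S$. If $\tau_k\in\Df_n(\Ik)$ I stop and win; otherwise $\tau_k\notin\Df_n(\Ik)$, in particular $\tau_k\notin\Tr_n(\Ik)$, so Lemma~\ref{lem:noid} supplies a translation $\gamma_k\in\Tr_n(\Ik)$ with $\tau_{k+1}:=\gamma_k^{-1}\tau_k^{-1}\gamma_k\tau_k\neq\id$. Then I check three things. (i) $\tau_{k+1}\in\langle\tau\rangle^S$: since $\gamma_k^{-1}\tau_k^{-1}\gamma_k$ is a conjugate of $\tau_k^{-1}$, we get $\tau_{k+1}\in\langle\tau_k\rangle^S\subseteq\langle\tau\rangle^S$, the last inclusion holding because $\langle\tau\rangle^S$ is a normal subgroup containing $\tau_k$. (ii) $\tau_{k+1}\in\BA_n(\Ik)\cap\SA_n(\Ik)$: immediate, as $\tau_{k+1}$ is a commutator of lower-triangular automorphisms. (iii) $\vd(\tau_{k+1})\llex\vd(\tau_k)$: regrouping $\tau_{k+1}=\gamma_k^{-1}\,(\tau_k^{-1}\gamma_k\tau_k)$, the component functions of $\tau_{k+1}$ differ from those of $\tau_k^{-1}\gamma_k\tau_k$ only by additive constants, so $\vd(\tau_{k+1})=\vd(\tau_k^{-1}\gamma_k\tau_k)$, and Lemma~\ref{lem:vdreduce}(2) (applicable since $\tau_k\notin\Df_n(\Ik)$) gives $\vd(\tau_k^{-1}\gamma_k\tau_k)\llex\vd(\tau_k)$. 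Because $(\IN^n,\llex)$ is a well-order, the strictly decreasing sequence $\vd(\tau_0)\glex\vd(\tau_1)\glex\cdots$ must be finite, so the construction halts; it can halt only by reaching some $\tau_N\in\Df_n(\Ik)$, which is then the desired nontrivial element of $\Df_n(\Ik)\cap\SA_n(\Ik)$ inside $\langle\tau\rangle^S$.

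The point I expect to need the most care is selecting the commutator in exactly the right order so that all of (i)--(iii) hold at once: the word $\gamma_k^{-1}\tau_k^{-1}\gamma_k\tau_k$ is simultaneously a product of conjugates of $\tau_k^{\pm1}$, which gives its membership in $\langle\tau\rangle^S$, and, after regrouping, a translation composed with precisely the map $\tau_k^{-1}\gamma_k\tau_k$ whose vector degree Lemma~\ref{lem:vdreduce} controls, which gives the strict decrease; meanwhile Lemma~\ref{lem:noid} is what keeps the descent from collapsing to the identity before it reaches $\Df_n(\Ik)$. Everything else --- the Jacobian-determinant bookkeeping, the triangularity of each $\tau_{k+1}$, and the termination via well-ordering --- should be routine.
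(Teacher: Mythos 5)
Your proof is correct and takes essentially the same route as the paper's: the paper runs the identical argument as an induction on $\vd(\tau)$, using Lemma~\ref{lem:noid} to get a nontrivial commutator $\gamma^{-1}\tau^{-1}\gamma\tau$, Lemma~\ref{lem:vdreduce} for the strict lexicographic decrease, and Corollary~\ref{cor:singleAffine} in the base case $\tau\in\Df_n(\Ik)$. Your explicit observation that the leading translation $\gamma_k^{-1}$ only shifts components by constants (so $\vd(\tau_{k+1})=\vd(\tau_k^{-1}\gamma_k\tau_k)$) is a small point the paper glosses over, but otherwise the two arguments coincide.
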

\begin{proof}

We induct on $\vd(\tau)$.  If $\vd \tau =(0,\ldots,0)$, then $\tau \in \Df_n(\Ik)$ and by Corollary \ref{cor:singleAffine} we have $\langle \tau \rangle ^S = \SL_n(\Ik)^S$.  
Otherwise, by Lemma \ref{lem:noid}, choose $\gamma \in \Tr_n(\Ik)$ such that $\tau _0 := \gamma ^{-1} \tau ^{-1} \gamma \tau \neq \id$.  Note that $\tau _0 \in \BA_n(\Ik) \cap \SA_n(\Ik)$, and by Lemma \ref{lem:vdreduce} $\vd (\tau _0) \llex \vd(\tau)$.  The induction hypothesis gives that $\tau _0$ is normally co-tame, and thus $\tau$ is also normally co-tame by Lemma \ref{reductionlemma}.
\end{proof}

\begin{corollary}\label{cor:parabolic}
Let $\phi \in \SA_n(\Ik) \cap \PA_n(\Ik)$ be parabolic and $\alpha \in \GL_n(\Ik)$. If $\phi \neq \id$, then $\alpha \phi\alpha ^{-1}$ is normally co-tame.
\end{corollary}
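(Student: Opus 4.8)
The plan is to first remove the outer conjugation by $\alpha$, and then reduce to Theorem~\ref{thm:triangular} by computing a single commutator. For the first reduction: conjugation by $\alpha\in\GL_n(\Ik)$ is an automorphism of $\GA_n(\Ik)$ preserving Jacobian determinants, so it restricts to automorphisms of $\SA_n(\Ik)$ and of $\Tr_n(\Ik)$ (and in particular $\alpha\phi\alpha^{-1}\in\SA_n(\Ik)$); hence, by Theorem~\ref{thm:Tr=SLIN}, it maps $\SLIN_n(\Ik)=\langle\Tr_n(\Ik)\rangle^S$ onto itself. Since $\langle\alpha\phi\alpha^{-1}\rangle^S=\alpha\langle\phi\rangle^S\alpha^{-1}$, I would conclude that $\alpha\phi\alpha^{-1}$ is normally co-tame if and only if $\phi$ is, so it suffices to show that every nontrivial $\phi\in\PA_n(\Ik)\cap\SA_n(\Ik)$ is normally co-tame.

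Write $\phi=(H_1,\ldots,H_{n-1},a_nx_n+P_n)$ with $(H_1,\ldots,H_{n-1})\in\GA_{n-1}(\Ik)$, $a_n\in\Ik^*$, $P_n\in\Ik[x_1,\ldots,x_{n-1}]$. If $H_i=a_nx_i$ for all $1\le i\le n-1$, then $\phi=(a_nx_1,\ldots,a_nx_{n-1},a_nx_n+P_n)\in\BA_n(\Ik)$, so Theorem~\ref{thm:triangular} applies directly (as $\phi\ne\id$). Otherwise I would fix $i$ with $H_i\ne a_nx_i$ and consider $\tau_0:=\epsilon_{n,x_i}^{-1}\phi^{-1}\epsilon_{n,x_i}\phi$, which lies in $\langle\phi\rangle^S$ because $\epsilon_{n,x_i}\in\EA_n(\Ik)\subseteq\SA_n(\Ik)$. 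Using $\phi^{-1}=(G_1,\ldots,G_{n-1},a_n^{-1}(x_n-P_n(G_1,\ldots,G_{n-1})))$ with $(G_1,\ldots,G_{n-1})=(H_1,\ldots,H_{n-1})^{-1}$, one computes that the first $n-1$ coordinates of $\tau_0$ collapse to $x_1,\ldots,x_{n-1}$ and that the $P_n$-contributions in the last coordinate cancel (since $P_n(G_1,\ldots,G_{n-1})$ becomes $P_n$ again after applying $\phi$), leaving
\[
\tau_0=(x_1,\ldots,x_{n-1},\,x_n+a_n^{-1}H_i-x_i)\in\BA_n(\Ik)\cap\SA_n(\Ik).
\]
This is not the identity precisely because $H_i\ne a_nx_i$, so by Theorem~\ref{thm:triangular} $\tau_0$ is normally co-tame, and then $\phi$ is normally co-tame by Lemma~\ref{reductionlemma}.

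The only real computation is the commutator identity in the last step, and it is routine; the single idea needed is to conjugate by the elementary map $\epsilon_{n,x_i}$, which feeds $x_i$ into the $x_n$-slot and thereby turns the failure of the first $n-1$ coordinates of $\phi$ to be ``scalar'' into a nontrivial triangular element of $\langle\phi\rangle^S$. I do not expect a serious obstacle; the points to be careful about are the bookkeeping in the first reduction (that conjugation by a possibly non-special linear map still preserves $\SLIN_n(\Ik)$) and checking that the first $n-1$ coordinates of $\tau_0$ really do reduce to the identity.
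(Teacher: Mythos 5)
Your proof is correct and follows essentially the same route as the paper: reduce to $\alpha=\id$, then conjugate by $\epsilon_{n,x_i}$ to produce a nontrivial elementary automorphism in $\langle\phi\rangle^S$ and invoke Theorem~\ref{thm:triangular} together with Lemma~\ref{reductionlemma}. The only (harmless) differences are that you handle the reduction to $\alpha=\id$ via invariance of $\SLIN_n(\Ik)$ under $\GL_n$-conjugation rather than the paper's decomposition $\alpha=\alpha_0\lambda$ with $\lambda\in\D_n(\Ik)$, and you track the scalar $a_n$ slightly more explicitly in the degenerate case.
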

\begin{proof}
First, we note that we may assume $\alpha = \id$.  Indeed, write $\alpha = \alpha _0 \lambda$ for some $\alpha _0 \in \SL_n(\Ik)$ and $\lambda \in \D_n(\Ik)$.  Then $\lambda \phi \lambda ^{-1} = \alpha _0 ^{-1} \left(\alpha \phi \alpha ^{-1} \right) \alpha _0 \in \langle \alpha \phi \alpha ^{-1} \rangle ^S$, and since $\lambda \in \D_n(\Ik)$, we have $\lambda \phi \lambda ^{-1} \in \PA_n(\Ik)$.

So it suffices to show that $\phi \in \PA_n(\Ik)$ is normally co-tame.  Write $\phi = \tau \theta$ for some $\tau = (x_1,\ldots,x_{n-1},x_n+P_n(x_1,\ldots,x_{n-1}) ) \in \BA_n(\Ik)$ and $\theta = (H_1(x_1,\ldots,x_{n-1}),\ldots,H_{n-1}(x_1,\ldots,x_{n-1}),x_n) \in \SA_{n-1}(\Ik)$.  Note that if $\theta = \id$, then $\phi=\tau \in \BA_n(\Ik)$ is normally co-tame by Theorem \ref{thm:triangular}.  Otherwise, choose $1 \leq i \leq n$ with $H_i \neq x_i$, and compute
$$\epsilon _{n,x_i} ^{-1} \phi^{-1} \epsilon _{n,x_i} \phi  = \epsilon _{n,x_i}^{-1} \theta ^{-1} \epsilon _{n,x_i} \theta  = \epsilon _{n, H_i-x_i}.$$
Since $H_i-x_i \neq 0$, $\epsilon _{n,H_i-x_i}\neq \id$.  Moreover $\epsilon _{n,H_i-x_i} \in \BA_n(\Ik)$ and is thus normally co-tame by Theorem \ref{thm:triangular}.
\end{proof}
This combined with Lemma \ref{lem:conjugateparabolic} yields the following useful result.
\begin{corollary}\label{cor:choosec}
Let $\phi \in \SA_n(\Ik)$ and $\alpha \in \GL_n(\Ik)$.  Either $\phi$ is normally co-tame, or there exists $c \in \Ik^*$ such that, setting $\gamma = \alpha \epsilon _{n,c} \alpha ^{-1}$, $\gamma ^{-1} \phi ^{-1} \gamma \phi \neq \id$.
\end{corollary}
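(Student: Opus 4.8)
The statement to prove, Corollary~\ref{cor:choosec}, follows almost immediately by combining the two preceding results. The plan is to argue by contrapositive: assume that no such $c$ exists, i.e.\ that for \emph{every} $c \in \Ik^*$ the commutator $\gamma_c^{-1} \phi^{-1} \gamma_c \phi$ is trivial, where $\gamma_c = \alpha \epsilon_{n,c} \alpha^{-1}$, and deduce that $\phi$ must be normally co-tame.

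First I would dispose of the $c = 0$ case: since $\epsilon_{n,0} = \id$, we have $\gamma_0 = \id$ and the commutator is automatically trivial, so in fact the hypothesis ``$\gamma_c^{-1}\phi^{-1}\gamma_c\phi \neq \id$ fails for all $c \in \Ik^*$'' is equivalent to ``$\gamma_c^{-1}\phi^{-1}\gamma_c\phi = \id$ for all $c \in \Ik$.'' This puts us in exactly the hypothesis of Lemma~\ref{lem:conjugateparabolic} (with $k = n$), which then produces some $\lambda \in \SL_n(\Ik)$ with $\lambda \phi \lambda^{-1} \in \PA_n(\Ik)$.

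Next I would split on whether $\lambda \phi \lambda^{-1} = \id$. If it is, then $\phi = \id$, which is normally co-tame (trivially, $\langle \id \rangle^S = \SA_n(\Ik) \supseteq \SLIN_n(\Ik)$ --- or one simply notes $\phi$ being the identity makes the ``$\phi$ is normally co-tame'' alternative vacuously available, though one should state this cleanly). Otherwise $\lambda \phi \lambda^{-1}$ is a nontrivial parabolic automorphism in $\SA_n(\Ik)$, so by Corollary~\ref{cor:parabolic} (applied with $\alpha = \id$ there, or directly) it is normally co-tame; equivalently $\langle \lambda \phi \lambda^{-1} \rangle^S \supseteq \SLIN_n(\Ik)$. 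Since $\langle \lambda \phi \lambda^{-1} \rangle^S = \langle \phi \rangle^S$ (conjugation by an element of $\SA_n(\Ik)$ preserves the normal closure in $\SA_n(\Ik)$), we conclude $\langle \phi \rangle^S \supseteq \SLIN_n(\Ik)$, i.e.\ $\phi$ is normally co-tame, as desired.

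There is essentially no obstacle here --- the content has been pushed entirely into Lemma~\ref{lem:conjugateparabolic} and Corollary~\ref{cor:parabolic}. The only points requiring a moment's care are (i) remembering to include $c = 0$ so that the ``for all $c \in \Ik^*$'' negation genuinely matches the ``for all $c \in \Ik$'' hypothesis of the lemma, and (ii) observing that normal closures in $\SA_n(\Ik)$ are conjugation-invariant, so that normal co-tameness transfers between $\phi$ and $\lambda\phi\lambda^{-1}$. Both are routine, so the proof will be just a few lines.
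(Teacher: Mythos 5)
Your route is exactly the paper's: the paper gives no separate proof of Corollary~\ref{cor:choosec}, only the remark that it is Lemma~\ref{lem:conjugateparabolic} combined with Corollary~\ref{cor:parabolic}, and your contrapositive argument (trivial commutators for all $c$, including the automatic $c=0$ case, force $\lambda\phi\lambda^{-1}\in\PA_n(\Ik)$ for some $\lambda\in\SL_n(\Ik)$; a nontrivial parabolic conjugate is normally co-tame by Corollary~\ref{cor:parabolic}; and normal closures in $\SA_n(\Ik)$ are invariant under conjugation by $\lambda\in\SL_n(\Ik)\subset\SA_n(\Ik)$) is precisely that.

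One genuine error: your disposal of the $\lambda\phi\lambda^{-1}=\id$ case is wrong. The normal subgroup of $\SA_n(\Ik)$ generated by the identity is the trivial subgroup, so $\langle\id\rangle^S=\{\id\}\not\supseteq\SLIN_n(\Ik)$ and the identity is \emph{not} normally co-tame under the paper's definition; there is no reading on which this alternative is ``vacuously available.'' In fact the corollary as literally stated fails at $\phi=\id$, since then both alternatives are false --- this is an implicit nondegeneracy hypothesis the paper glosses over, and every invocation of the corollary (Theorems~\ref{thm:bitriangular}, \ref{thm:3triangular}, \ref{thm:4triangular}) occurs under a standing assumption that the relevant map is nontrivial. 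So instead of asserting that $\id$ is normally co-tame, you should either add the hypothesis $\phi\neq\id$ to the statement or observe that the $\phi=\id$ case never arises in its applications. The rest of your argument is correct as written.
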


\begin{theorem}\label{thm:bitriangular}
Let $\phi \in \SA_n(\Ik)$ be two-triangular.  If $\phi \neq \id$, then $\langle \phi \rangle ^S = \SLIN_n(\Ik)$, and in particular $\phi$ is normally co-tame.
\end{theorem}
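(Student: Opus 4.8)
The plan is to reduce a general two-triangular automorphism to a case where we can produce a nontrivial triangular automorphism in its normal closure, and then invoke Theorem \ref{thm:triangular}. By Corollary \ref{cotame=}, it suffices to show $\phi$ is normally co-tame. Write $\phi = \alpha_0 \tau_1 \alpha_1 \tau_2 \alpha_2$ with $\tau_i \in \BA_n(\Ik)$ and $\alpha_i \in \Af_n(\Ik)$. Since conjugating $\phi$ by an affine automorphism does not change whether it is normally co-tame (the normal closure is conjugation-invariant, and affine conjugates of normally co-tame maps are normally co-tame because $\Af_n \cap \SA_n \subset \SLIN_n$ by Theorem \ref{thm:Tr=SLIN} and Corollary \ref{cor:singleAffine}), I can absorb $\alpha_0$ and $\alpha_2$ and reduce to studying $\psi = \tau_1 \beta \tau_2$ for a single affine $\beta$, or even (writing $\beta = \lambda \gamma$ with $\lambda$ linear and $\gamma$ a translation, and noting the translation can be pushed into $\tau_2$) to $\psi = \tau \lambda \sigma$ with $\tau, \sigma \in \BA_n(\Ik)$ and $\lambda \in \GL_n(\Ik)$.

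The main step is a commutator computation. Following the pattern of Corollary \ref{cor:parabolic} and Corollary \ref{cor:choosec}, I would pick a translation $\gamma$ (or a linear conjugate $\alpha \epsilon_{n,c}\alpha^{-1}$ of an elementary map, as in Corollary \ref{cor:choosec}) and compute $\gamma^{-1}\phi^{-1}\gamma\phi$. The triangular factors $\tau_i$ interact with a translation in a controlled way via Lemma \ref{lem:vdreduce}: conjugating a translation by a triangular map strictly lowers the vector degree (or keeps it a translation). The hope is that $\gamma^{-1}\phi^{-1}\gamma\phi$ simplifies to something triangular — or at least parabolic up to a linear conjugacy, so that Corollary \ref{cor:parabolic} applies — and is nontrivial for a suitable choice of $\gamma$. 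The dichotomy in Corollary \ref{cor:choosec} is exactly the tool: either $\phi$ is already normally co-tame (and we are done), or we can choose $c$ making the relevant commutator nontrivial, and then we must argue that this commutator lies in a class already known to be normally co-tame.

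Concretely, I expect the argument to run: if $\phi$ is not normally co-tame, apply Corollary \ref{cor:choosec} with a cleverly chosen $\alpha \in \GL_n(\Ik)$ to get $\gamma$ with $\gamma^{-1}\phi^{-1}\gamma\phi \neq \id$; then show that because $\phi$ is two-triangular, this commutator is itself either triangular (conjugate into $\BA_n$) or at worst a conjugate of a parabolic map, hence normally co-tame by Theorem \ref{thm:triangular} or Corollary \ref{cor:parabolic}; then Lemma \ref{reductionlemma} finishes. The key structural fact I would need is that conjugating $\phi = \tau_1 \lambda \tau_2$ by an elementary translation and multiplying by $\phi^{-1}$ collapses the two triangular layers enough — likely by choosing the translation to be in a coordinate direction compatible with the triangular structure of $\tau_2$, so that $\tau_2$ partially commutes past it and the surviving $\tau_1$-part retains triangular form after a fixed linear change of coordinates.

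The hard part will be organizing the commutator so that the two triangular layers genuinely collapse to one (up to affine conjugacy). With a single triangular layer this is transparent (that is essentially Theorem \ref{thm:triangular} combined with Corollary \ref{cor:parabolic}), but with two layers sandwiching a linear map, a naive commutator can still be two-triangular, so the choice of the conjugating translation — and possibly an induction on some complexity measure like $\vd(\tau_1) + \vd(\tau_2)$ mirroring the proof of Theorem \ref{thm:triangular} — will be where the real work lies. I anticipate needing to split into cases according to whether the linear middle factor $\lambda$ mixes the "bottom" variables of $\tau_1$ with the variables on which $\tau_2$ acts nontrivially, handling the degenerate non-mixing case via a direct reduction to the one-triangular theorem and the generic mixing case via the commutator trick above.
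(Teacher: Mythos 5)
Your overall architecture matches the paper's: normalize to $\phi=\alpha_0\tau_1\alpha_1\tau_2$ with linear $\alpha_i$, use Corollary \ref{cor:choosec} to obtain a nontrivial commutator $\tilde\phi=\gamma^{-1}\phi^{-1}\gamma\phi$ with $\gamma$ a conjugated translation, and then reduce to Theorem \ref{thm:triangular} via Lemma \ref{reductionlemma}. But there is a genuine gap at exactly the step you flag as ``the hard part'': you never identify which translation makes the two triangular layers collapse, and the fallback you sketch (an induction on $\vd(\tau_1)+\vd(\tau_2)$ plus a case split on how the middle linear factor mixes variables) is neither needed nor shown to terminate. The missing fact is elementary but decisive: a translation in the \emph{last} variable passes through any lower triangular map, i.e.\ if $\tau\in\BA_n(\Ik)$ has $(x_n)\tau=a_nx_n+P_n(x_1,\ldots,x_{n-1})$, then $\tau^{-1}\epsilon_{n,c}\tau=\epsilon_{n,c'}$ for a suitable $c'\in\Ik^*$, since only the last coordinate of $\tau$ involves $x_n$, and linearly. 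Taking $\gamma=\alpha_0\epsilon_{n,c}\alpha_0^{-1}$ (Corollary \ref{cor:choosec} is stated with an arbitrary $\alpha\in\GL_n(\Ik)$ precisely to allow this), one gets in a single line
$$\tilde\phi=\gamma^{-1}\tau_2^{-1}\alpha_1^{-1}\bigl(\tau_1^{-1}\epsilon_{n,c}\tau_1\bigr)\alpha_1\tau_2=\gamma^{-1}\tau_2^{-1}\bigl(\alpha_1^{-1}\epsilon_{n,c'}\alpha_1\bigr)\tau_2,$$
which lies in $\BA_n(\Ik)\cap\SA_n(\Ik)$ because $\alpha_1^{-1}\epsilon_{n,c'}\alpha_1$ is a translation and $\BA_n(\Ik)$ is a group containing $\Tr_n(\Ik)$. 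So the commutator is honestly triangular, not merely parabolic, and no induction or case analysis occurs.

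Two smaller points. First, you cannot conjugate away both $\alpha_0$ and $\alpha_2$: conjugation moves an outer linear factor from one end to the other, so the achievable normal form is $\alpha_0\tau_1\alpha_1\tau_2$ (or $\tau_1\alpha_1\tau_2\alpha_2$) rather than $\tau_1\lambda\tau_2$, and the residual outer factor is exactly why $\gamma$ must be taken to be $\alpha_0\epsilon_{n,c}\alpha_0^{-1}$ rather than $\epsilon_{n,c}$ itself. Second, you aimed the compatibility of the translation at $\tau_2$; in the expression $\phi^{-1}\gamma\phi$ it is the \emph{inner} layer $\tau_1$ that $\gamma$ must commute past, while $\tau_2$ needs no compatibility at all, since conjugating any translation by $\tau_2$ stays in $\BA_n(\Ik)$.
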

\begin{proof}
Since $\phi$ is two-triangular, we may write $\phi = \alpha _0 \tau _1 \alpha _1 \tau _2 \alpha _2$ for some $\alpha _i \in \Af_n(\Ik)$ and $\tau _i \in \BA_n(\Ik)$; but noting that $\Af_n(\Ik)=\GL_n(\Ik) \ltimes \Tr_n(\Ik)$, and $\Tr_n(\Ik) \leq \BA_n(\Ik)$, we may assume further that each $\alpha _i \in \GL_n(\Ik)$.  Moreover, by a standard argument we may assume additionally that each $\alpha _i \in \SL_n(\Ik)$ and $\tau _i \in \SA_n(\Ik) \cap \BA_n(\Ik)$.

We also note that by Lemma \ref{reductionlemma} we may assume $\alpha _2 = \id$, as $$(\alpha _2  \alpha _0) \tau _1 \alpha _1 \tau _2 = \alpha _2  \phi \alpha _2 ^{-1} \in \langle \phi \rangle ^S.$$

Now by Corollary \ref{cor:choosec}, we may assume there exists $c \in \Ik^*$ such that,  letting $\gamma = \alpha _1 \epsilon _{n,c} \alpha _1 ^{-1} \in \Tr_n (\Ik)$, $\gamma ^{-1} \phi ^{-1} \gamma \phi \neq \id$.  So we set $\tilde{\phi} =  \left(\gamma ^{-1} \phi ^{-1} \gamma\right) \phi \in \langle \phi \rangle ^S$ and compute
$$ \tilde{\phi} = \gamma ^{-1} \phi ^{-1} \gamma \phi = \gamma ^{-1} \tau _2 ^{-1} \alpha _1 ^{-1} \tau _1 ^{-1} \epsilon _{n,1} \tau _1 \alpha _1 \tau _2  =\gamma  ^{-1} \tau _2 ^{-1} \alpha _1 ^{-1} \epsilon _{n,1} \alpha _1 \tau _2$$

Since $\alpha _1 ^{-1} \epsilon _{n,1} \alpha _1 \in \Tr_n(\Ik) \leq \BA_n(\Ik)$, we thus have $\tilde{\phi} \in \BA_n(\Ik) \cap \SA_n (\Ik)$, and thus $\langle \phi \rangle ^S \geq \langle \tilde{\phi} \rangle ^S \geq \SLIN_n(\Ik)$ by Theorem \ref{thm:triangular}. 
\end{proof}

\begin{theorem}\label{thm:3triangular}
Let $\phi \in \SA_n(\Ik)$ be three-triangular.  If $\phi \neq \id$, then $\langle \phi \rangle ^S = \SLIN_n(\Ik)$, and in particular $\phi$ is normally co-tame.
\end{theorem}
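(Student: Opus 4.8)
The plan is to reduce the three-triangular case to the two-triangular case (Theorem~\ref{thm:bitriangular}) by a commutator argument, running an induction on the vector degree of the \emph{middle} triangular factor; as observed just before Theorem~\ref{thm:triangular}, it suffices to prove that $\phi$ is normally co-tame. First I would perform the same normalizations as in the proof of Theorem~\ref{thm:bitriangular}: writing $\phi = \alpha_0\tau_1\alpha_1\tau_2\alpha_2\tau_3\alpha_3$ with $\alpha_i \in \Af_n(\Ik)$ and $\tau_i \in \BA_n(\Ik)$, use $\Af_n(\Ik) = \GL_n(\Ik)\ltimes\Tr_n(\Ik)$ together with $\Tr_n(\Ik) \leq \BA_n(\Ik)$ and the standard redistribution of diagonal and determinant factors to arrange $\alpha_i \in \SL_n(\Ik)$ and $\tau_i \in \SA_n(\Ik)\cap\BA_n(\Ik)$, and then, by conjugating with $\alpha_3$ (cf.\ Lemma~\ref{reductionlemma}, absorbing $\alpha_3$ into $\alpha_0$), assume $\alpha_3 = \id$, so $\phi = \alpha_0\tau_1\alpha_1\tau_2\alpha_2\tau_3$.

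The induction is on $\vd(\tau_2)$, ordered by $\llex$; the statement to prove is that every nontrivial automorphism admitting \emph{some} expression $\alpha_0\tau_1\alpha_1\tau_2\alpha_2\tau_3$ (with $\alpha_i\in\SL_n(\Ik)$ and $\tau_i\in\SA_n(\Ik)\cap\BA_n(\Ik)$) whose middle factor has a prescribed vector degree is normally co-tame. The base case $\vd(\tau_2) = (0,\dots,0)$ is immediate: then $\tau_2\in\Df_n(\Ik)\subseteq\Af_n(\Ik)$, so $\phi$ is two-triangular and Theorem~\ref{thm:bitriangular} applies. For the inductive step, assume $\vd(\tau_2) \neq (0,\dots,0)$, so $\tau_2\notin\Df_n(\Ik)$, and apply Corollary~\ref{cor:choosec} with $\alpha=\alpha_0$: either $\phi$ is normally co-tame and we are done, or there is $c\in\Ik^*$ such that, setting $\gamma := \alpha_0\epsilon_{n,c}\alpha_0^{-1}\in\Tr_n(\Ik)$, the element $\tilde\phi := \gamma^{-1}\phi^{-1}\gamma\phi \in \langle\phi\rangle^S$ is nontrivial. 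The point of taking $\gamma$ to be a conjugate of $\epsilon_{n,c}$ by precisely the outermost affine factor is that in $\tilde\phi$ the inner copies of $\alpha_0$ cancel, leaving $\tau_1^{-1}\epsilon_{n,c}\tau_1$ in the middle; since $\epsilon_{n,c}$ translates only in the last variable, a direct check gives $\tau_1^{-1}\epsilon_{n,c}\tau_1 = \epsilon_{n,c/a}$, where $a\in\Ik^*$ is the coefficient of $x_n$ in $(x_n)\tau_1$, which is again a translation. Carrying this through yields
\[
\tilde\phi \;=\; \gamma^{-1}\,\tau_3^{-1}\,\alpha_2^{-1}\,\bigl(\tau_2^{-1}\delta\tau_2\bigr)\,\alpha_2\,\tau_3,
\qquad
\delta := \alpha_1^{-1}\,\epsilon_{n,c/a}\,\alpha_1 \in \Tr_n(\Ik).
\]
Absorbing $\gamma^{-1}$ into $\tau_3^{-1}$ (a product of triangular maps, hence triangular) and putting $\tau_2' := \tau_2^{-1}\delta\tau_2 \in \SA_n(\Ik)\cap\BA_n(\Ik)$, this displays $\tilde\phi$ as a nontrivial three-triangular automorphism whose middle factor is $\tau_2'$; and since $\tau_2\notin\Df_n(\Ik)$, Lemma~\ref{lem:vdreduce}(2) gives $\vd(\tau_2') \llex \vd(\tau_2)$. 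Hence the induction hypothesis applies to $\tilde\phi$, so $\tilde\phi$ is normally co-tame, and as $\tilde\phi\in\langle\phi\rangle^S$, Lemma~\ref{reductionlemma} shows $\phi$ is normally co-tame.

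I expect the main obstacle to be the bookkeeping behind the displayed identity: verifying that conjugating $\epsilon_{n,c}$ by an arbitrary triangular automorphism stays of $\epsilon_{n,\bullet}$ type, and checking that after forming the commutator the outer affine factors really do cancel, so that $\tilde\phi$ is genuinely three-triangular (rather than four-triangular) with a strictly smaller middle vector degree. A smaller but essential point is to phrase the inductive hypothesis in terms of the \emph{existence} of a three-triangular expression with a prescribed middle vector degree, so that it applies to $\tilde\phi$, which comes equipped with such an expression at a strictly lower point of the well-ordered set $(\IN^n,\llex)$.
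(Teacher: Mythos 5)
Your proposal is correct and follows essentially the same route as the paper's proof: the same normalization to $\alpha_0\tau_1\alpha_1\tau_2\alpha_2\tau_3$ with $\alpha_i\in\SL_n(\Ik)$, the same induction on $\vd(\tau_2)$ with base case handled by Theorem~\ref{thm:bitriangular}, and the same commutator $\gamma^{-1}\phi^{-1}\gamma\phi$ with $\gamma=\alpha_0\epsilon_{n,c}\alpha_0^{-1}$ supplied by Corollary~\ref{cor:choosec}, collapsing $\tau_1$ and reducing $\vd(\tau_2)$ via Lemma~\ref{lem:vdreduce}. Your explicit observation that $\tau_1^{-1}\epsilon_{n,c}\tau_1=\epsilon_{n,c/a}$ (rather than $\epsilon_{n,c}$) is a small point of added care over the paper's computation, but does not change the argument.
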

\begin{proof}
As in the proof of Theorem \ref{thm:triangular}, we may assume $\phi = \alpha _0 \tau _1 \alpha _1 \tau _2 \alpha _2 \tau _3$ for some $\alpha _i \in \SL_n(\Ik)$ and $\tau _i \in \SA_n(\Ik) \cap \BA_n(\Ik)$.  We induct on  $\vd(\tau _2)$; if $\vd (\tau _2) =(0,\ldots, 0)$, then $\tau _2 \in \Df_n(\Ik)$ in which case $\phi$ is two triangular and thus normally co-tame by Theorem \ref{thm:bitriangular}.  

So we now assume $(0,\ldots,0) \llex \vd(\tau _2)$. By Corollary \ref{cor:choosec}, we may assume there exists $c \in \Ik^*$ such setting $\gamma = \alpha _0 \epsilon _{n,1} \alpha _0 ^{-1} \in \Tr_n (\Ik)$ and $\tilde{\phi} = \left(\gamma^{-1} \phi ^{-1} \gamma\right)  \phi \in \langle \phi \rangle ^S$, we have $\tilde{\phi} \neq \id$.  We then compute
\begin{align*}
\tilde{\phi} &= \gamma^{-1} \phi ^{-1} \gamma \phi  \\
&= \gamma ^{-1}  \tau _3 ^{-1} \alpha _2 ^{-1} \tau _2 ^{-1} \alpha _1 ^{-1} \tau _1 ^{-1} \epsilon _{n,c} \tau _1 \alpha _1 \tau _2 \alpha _2 \tau _3 \\
&= \gamma ^{-1}  \tau _3 ^{-1} \alpha _2 ^{-1} \tau _2 ^{-1} \alpha _1 ^{-1}  \epsilon _{n,c}  \alpha _1 \tau _2 \alpha _2 \tau _3 
\end{align*}

Note that $\alpha _1 ^{-1} \epsilon _{n,c} \alpha _1 \in \Tr_n(\Ik)$, so setting $\tilde{\tau}_2 = \tau _2 ^{-1} \alpha _1 ^{-1} \epsilon _{n,c} \alpha _1 \tau _2$, we see $\tilde{\tau} _2 \in \BA_n(\Ik)$ with $\vd(\tilde{\tau} _2 ) < \vd (\tau _2)$ by Lemma \ref{lem:vdreduce}.  Thus we have
$$\tilde{\phi} = \gamma ^{-1} \tau _3 ^{-1} \alpha _2 ^{-1} \tilde{\tau}_2 \alpha _2 \tau _3$$
and we see $\tilde{\phi}$ is 3-triangular, and thus $\langle \tilde{\phi} \rangle ^S \geq \SLIN_n(\Ik)$ by the induction hypothesis.
\end{proof}

\begin{theorem}\label{thm:4triangular}
Let $\phi \in \SA_n(\Ik)$ be four-triangular.  If $\phi \neq \id$, then $\langle \phi \rangle ^S = \SLIN_n(\Ik)$, and in particular $\phi$ is normally co-tame.
\end{theorem}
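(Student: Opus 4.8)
The plan is to reduce the four-triangular case to the three-triangular case (Theorem~\ref{thm:3triangular}) by an induction that parallels the proofs of Theorems~\ref{thm:bitriangular} and~\ref{thm:3triangular}. First I would apply the usual normalizations: absorbing translations into the triangular factors, balancing Jacobian determinants, and conjugating (via Lemma~\ref{reductionlemma}) to delete the trailing affine factor, one may assume $\phi = \alpha_0\tau_1\alpha_1\tau_2\alpha_2\tau_3\alpha_3\tau_4$ with every $\alpha_i \in \SL_n(\Ik)$ and every $\tau_i \in \SA_n(\Ik)\cap\BA_n(\Ik)$; by Corollary~\ref{cotame=} it then suffices to show that $\phi$ is normally co-tame. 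I would induct on $\vd(\tau_2)$, ordered by $\llex$. If $\vd(\tau_2) = (0,\dots,0)$ then $\tau_2 \in \Df_n(\Ik)$, so $\alpha_1\tau_2\alpha_2 \in \Af_n(\Ik)$ and $\phi$ is three-triangular, hence normally co-tame by Theorem~\ref{thm:3triangular}.

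For the inductive step, assume $\tau_2 \notin \Df_n(\Ik)$. By Corollary~\ref{cor:choosec} with $\alpha = \alpha_0$, either $\phi$ is normally co-tame (and we are done) or there is $c \in \Ik^*$ for which, putting $\gamma = \alpha_0\epsilon_{n,c}\alpha_0^{-1} \in \Tr_n(\Ik)$, the element $\tilde\phi := \gamma^{-1}\phi^{-1}\gamma\phi \in \langle\phi\rangle^S$ is nontrivial. Since $\epsilon_{n,c}$ translates along the last coordinate, it remains a translation after conjugation by the triangular map $\tau_1$; hence $\gamma_1 := \alpha_1^{-1}(\tau_1^{-1}\epsilon_{n,c}\tau_1)\alpha_1 \in \Tr_n(\Ik)$, and with $\tilde\tau_2 := \tau_2^{-1}\gamma_1\tau_2 \in \SA_n(\Ik)\cap\BA_n(\Ik)$ and $\mu := \alpha_2\tau_3\alpha_3\tau_4 \in \SA_n(\Ik)$, the same bookkeeping as in the lower cases yields
\[ \tilde\phi = \gamma^{-1}\mu^{-1}\tilde\tau_2\mu. \]
Lemma~\ref{lem:vdreduce} gives $\vd(\tilde\tau_2) \llex \vd(\tau_2)$, and $\tilde\tau_2 \neq \id$ (otherwise $\tilde\phi = \id$).

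The new feature, absent for $m\le 3$, is that the obvious regrouping of $\tilde\phi$ is \emph{five}-triangular, not four-triangular, so I would not argue with $\tilde\phi$ directly. Instead, conjugating by $\mu \in \SA_n(\Ik)$ produces
\[ \psi := \mu\tilde\phi\mu^{-1} = (\mu\gamma^{-1}\mu^{-1})\,\tilde\tau_2 = \alpha_2\tau_3\alpha_3\,\nu\,\alpha_3^{-1}\tau_3^{-1}\alpha_2^{-1}\,\tilde\tau_2 \in \langle\phi\rangle^S, \]
where $\nu := \tau_4\gamma^{-1}\tau_4^{-1} \in \SA_n(\Ik)\cap\BA_n(\Ik)$; this is an alternating product of four affine and four triangular factors, each lying in $\SA_n(\Ik)$, so cyclically permuting its syllables — a move realized by conjugation inside $\SA_n(\Ik)$ — gives
\[ \phi' := \alpha_3^{-1}\tau_3^{-1}\alpha_2^{-1}\,\tilde\tau_2\,\alpha_2\tau_3\alpha_3\,\nu \in \langle\phi\rangle^S, \]
a nontrivial four-triangular automorphism, already in normalized form, whose \emph{second} triangular factor is $\tilde\tau_2$ with $\vd(\tilde\tau_2) \llex \vd(\tau_2)$. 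The induction hypothesis applies to $\phi'$, so $\phi'$ — and therefore $\phi$, since $\langle\phi\rangle^S \supseteq \langle\phi'\rangle^S \supseteq \SLIN_n(\Ik)$ — is normally co-tame.

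The step I expect to be the crux is exactly this bridge between the five-triangular expression obtained mechanically and the four-triangular expression the induction needs: one must recognize that the commutator has the conjugated shape $\gamma^{-1}\mu^{-1}\tilde\tau_2\mu$, conjugate away the outer copies of $\mu$, and then cyclically reposition the reduced factor $\tilde\tau_2$ back into the slot on which the induction runs. The supporting verifications — that $\nu$ and $\tilde\tau_2$ are genuinely special triangular so that $\phi'$ really is a normalized four-triangular map, that $\phi' \neq \id$, and that the initial normalization does not inflate $\vd(\tau_2)$ (as is tacitly used in Theorems~\ref{thm:bitriangular} and~\ref{thm:3triangular}) — are routine.
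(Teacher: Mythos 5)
Your proof is correct, and while it runs on the same engine as the paper's --- Corollary~\ref{cor:choosec} to get a nontrivial commutator with a conjugated translation, Lemma~\ref{lem:vdreduce} to drop a vector degree, and Theorem~\ref{thm:3triangular} as the base case --- it is organized along a genuinely different decomposition. The paper spends a first commutator (with $\gamma=\alpha_4^{-1}\epsilon_{n,c}\alpha_4$) to force the palindromic normal form $\phi=\tau_1\alpha_1\tau_2\alpha_2\tau_3\alpha_2^{-1}\tau_2^{-1}\alpha_1^{-1}$, and then inducts on $\vd(\tau_3)$, the middle syllable, where the built-in symmetry is what keeps each successive commutator four-triangular. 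You instead keep the generic form, induct on $\vd(\tau_2)$, and defuse the apparent five-triangular obstruction by recognizing $\tilde\phi=\gamma^{-1}\mu^{-1}\tilde\tau_2\mu$ (which I have checked), conjugating away $\mu$, and cyclically permuting the syllables; this trick works precisely at $m=4$ because $\mu\gamma^{-1}\mu^{-1}$ then contributes only three triangular factors. Your version buys a cleaner one-stage induction and has the incidental virtue that the reduced factor is $\tau_2^{-1}\gamma_1\tau_2$, i.e.\ a conjugation oriented exactly as in Lemma~\ref{lem:vdreduce}(2), whereas the paper's inductive step reduces $\tau_3\delta\tau_3^{-1}$, a conjugation by $\tau_3^{-1}$; the paper's symmetrization, on the other hand, is the step that generalizes the structure used in Theorem~\ref{thm:bitriangular}. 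One small slip: your parenthetical ``$\tilde\tau_2\neq\id$ (otherwise $\tilde\phi=\id$)'' is backwards --- if $\tilde\tau_2=\id$ then $\tilde\phi=\gamma^{-1}\neq\id$ --- but this is harmless, since $\tilde\tau_2$ is conjugate to the nontrivial translation $\gamma_1$ and hence nontrivial anyway, and your induction only needs $\phi'\neq\id$, which follows from $\phi'$ being conjugate to $\tilde\phi$.
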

\begin{proof}
As in the proofs of Theorems \ref{thm:bitriangular} and \ref{thm:3triangular}, we may assume $\phi =  \tau _1 \alpha _1 \tau _2 \alpha _2 \tau _3\alpha _3 \tau _4 \alpha _4$ for some $\alpha _i \in \SL_n(\Ik)$ and $\tau _i \in \SA_n(\Ik) \cap \BA_n(\Ik)$.  

\begin{claim}We may assume further that $\alpha _3 = \alpha _2 ^{-1}$, $\tau _4 = \tau _2 ^{-1}$, and $\alpha _4 = \alpha _1 ^{-1}$.
\end{claim}
\begin{proof}[Proof of claim]
To establish this claim, we first note that by Corollary \ref{cor:choosec}, we may choose $c \in \Ik$ such that, letting  $\gamma = \alpha _4^{-1} \epsilon _{n,c} \alpha _4  \in \Tr_n (\Ik)$ and $\tilde{\phi} = \gamma \phi  \gamma^{-1}  \phi ^{-1}$, we have $\tilde{\phi} \neq \id$.  We then compute
\begin{align*}
\tilde{\phi} &= \gamma \phi   \gamma^{-1} \phi ^{-1}  \\
&= \gamma \tau _1 \alpha _1 \tau _2 \alpha _2 \tau _3\alpha _3 \tau _4 \alpha _4 \gamma ^{-1} \alpha _4 ^{-1} \tau _4 ^{-1} \alpha _3 ^{-1} \tau _3 ^{-1} \alpha _2 ^{-1} \tau _2 ^{-1} \alpha _1 ^{-1} \tau _1 ^{-1} \\
&= \gamma \tau _1 \alpha _1 \tau _2 \alpha _2 \tau _3\alpha _3 \tau _4 \epsilon _{n,c}^{-1} \tau _4 ^{-1} \alpha _3 ^{-1} \tau _3 ^{-1} \alpha _2 ^{-1} \tau _2 ^{-1} \alpha _1 ^{-1} \tau _1 ^{-1} \\
&= \gamma \tau _1 \alpha _1 \tau _2 \alpha _2 \tau _3\alpha _3  \epsilon _{n,c}^{-1} \alpha _3 ^{-1} \tau _3 ^{-1} \alpha _2 ^{-1} \tau _2 ^{-1} \alpha _1 ^{-1} \tau _1 ^{-1}. 
\end{align*}

Note that $\alpha _3 \epsilon _{n,c}^{-1} \alpha _3^{-1}  \in \Tr_n(\Ik)$, so setting $\tilde{\tau}_3 = \tau _3 \alpha _3 \epsilon _{n,c}^{-1} \alpha _3 ^{-1} \tau _3 ^{-1} $, we see $\tilde{\tau} _3 \in \BA_n(\Ik)$.  Thus we have
$$\tilde{\phi} = \gamma \tau _1 \alpha _1 \tau _2 \alpha _2 \tilde{\tau}_3 \alpha _2 ^{-1} \tau _2 ^{-1} \alpha _1 ^{-1} \tau _1 ^{-1}.$$
Finally, observe that $\tau _1^{-1} \tilde{\phi} \tau _1  \in \langle \tilde{\phi} \rangle ^S \leq \langle \phi \rangle ^S$, and that
$$\tau _1 ^{-1} \tilde{\phi} \tau _1  =  (\tau _1 ^{-1} \gamma  \tau _1 ) \alpha _1 \tau _2 \alpha _2 \tilde{\tau} _3 \alpha _2 ^{-1} \tau _2 ^{-1} \alpha _1 ^{-1}.$$
Observing that $\tau _1 ^{-1} \gamma  \tau _1  \in \BA_n(\Ik)$, we have $\tau _1 ^{-1} \tilde{\phi} \tau _1 $ is in the claimed form, and by Lemma \ref{reductionlemma} it suffices to show this map is normally co-tame.
\end{proof}

We now assume  $\phi = \tau _1 \alpha _1 \tau _2 \alpha _2 \tau _3 \alpha _2 ^{-1} \tau _2 ^{-1} \alpha _1 ^{-1}$.  We will induct on $\vd (\tau _3)$, with the $\vd(\tau_3)=(0,\ldots,0)$ case reducing to Theorem \ref{thm:3triangular}.
Once again appealing to Corollary \ref{cor:choosec}, choose $c \in \Ik$ such that setting
\begin{align*}
\gamma &= \alpha _1 \epsilon _{n,c} \alpha _1 ^{-1} &\text{and}&  &\tilde{\phi} = \left(\gamma^{-1} \phi  \gamma \right) \phi ^{-1} \in \langle \phi \rangle ^S.
\end{align*}
we have $\tilde{\phi} \neq \id$.
Then we compute
\begin{align*}
\tilde{\phi} &= \gamma ^{-1} \tau _1 \alpha _1 \tau _2 \alpha _2 \tau _3 \alpha _2 ^{-1} \epsilon _{n,1} \alpha _2 \tau _3 ^{-1} \alpha _2 ^{-1} \tau _2 ^{-1} \alpha _1 ^{-1} \tau _1 ^{-1}
\end{align*}

Let $\tilde{\tau _3} = \tau _3 \alpha _2 ^{-1} \epsilon _{n,1} \alpha _2 \tau _3 ^{-1}$ and let $\tilde{\tau}_1 = \tau _1 ^{-1} \gamma ^{-1} \tau _1$; note that $\tilde{\tau}_1, \tilde{\tau} _3 \in \BA_n(\Ik)$ and $\vd(\tilde{\tau}_3) <\vd(\tau_3)$ by Lemma \ref{lem:vreduce}.  Then we have 
$$\tau _1 ^{-1} \tilde{\phi} \tau _1 = \tilde{\tau} _1 \alpha _1 \tau _2 \alpha _2 \tilde{\tau _3} \alpha _2 ^{-1} \tau _2 ^{-1} \alpha _1 ^{-1} $$
and the induction hypothesis completes the proof.
\end{proof}

\begin{theorem}\label{thm:exponential}
Let $\Ik$ be a field of characteristic zero.
Let $D$ be a nonzero triangular derivation and $F \in \ker D$.  Then $\exp(FD)$ is normally cotame.
\end{theorem}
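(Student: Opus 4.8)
The plan is to induct on $d:=\deg_{x_n}F$. Write $\phi:=\exp(FD)$; since $D$ is locally nilpotent and $F\in\ker D$ one has $(FD)^k=F^kD^k$ for all $k$, so $FD$ is locally nilpotent and, $\Ik$ having characteristic zero, $\phi\in\SA_n(\Ik)$ (a standard fact about exponentials of locally nilpotent derivations). We assume $F\neq 0$, the case $F=0$ giving $\phi=\id$; then $\phi\neq\id$. Two observations will be used throughout: each coefficient $P_i:=Dx_i\in\Ik[x_1,\dots,x_{i-1}]$ of $D$ is free of $x_n$ (since $i-1<n$), and consequently the shift $\sigma_c\colon x_n\mapsto x_n+c$ fixes every $P_i$; as $\sigma_c$ also commutes with each $\partial_{x_j}$, it follows that $\sigma_c(\ker D)\subseteq\ker D$. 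Set $F^{(c)}:=\sigma_c(F)=F(x_1,\dots,x_{n-1},x_n+c)\in\ker D$.

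\emph{Base case $d=0$.} If $F\in\Ik[x_1,\dots,x_{n-1}]$, then since every $P_i$ also lies in $\Ik[x_1,\dots,x_{n-1}]$, the derivation $FD$ maps the subring $\Ik[x_1,\dots,x_{n-1}]$ into itself and maps $x_n$ into it; hence $\phi=(H_1,\dots,H_{n-1},\,x_n+Q)$ for some $H_i,Q\in\Ik[x_1,\dots,x_{n-1}]$, i.e.\ $\phi\in\PA_n(\Ik)\cap\SA_n(\Ik)$. As $\phi\neq\id$, Corollary~\ref{cor:parabolic} (with $\alpha=\id$) shows $\phi$ is normally co-tame.

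\emph{Inductive step $d\geq 1$.} Fix any $c\in\Ik^*$ and set $\gamma:=\epsilon_{n,c}\in\Tr_n(\Ik)$. The key identity is
$$\gamma^{-1}\phi^{-1}\gamma\phi=\exp\bigl((F-F^{(c)})D\bigr).$$
To prove it I would argue as follows: conjugating $FD$ by $\gamma$ replaces the coefficient $FP_i$ of $\partial_{x_i}$ by $\sigma_{c}(FP_i)=F^{(c)}P_i$ (the $P_i$ being $\sigma_c$-invariant), so $\gamma^{-1}\phi\gamma=\exp(F^{(c)}D)$; and since $F,F^{(c)}\in\ker D$ the derivations $FD$ and $F^{(c)}D$ commute, so the two exponentials in $\gamma^{-1}\phi^{-1}\gamma\phi=\exp(-F^{(c)}D)\exp(FD)$ combine into the displayed one (the sign of $c$ is immaterial, as $c$ ranges over $\Ik^*$). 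Put $G:=F-F^{(c)}\in\ker D$. Writing $F=\sum_{i=0}^d a_ix_n^i$ with $a_i\in\Ik[x_1,\dots,x_{n-1}]$ and $a_d\neq 0$, the $x_n^d$-terms cancel in $G$ while the coefficient of $x_n^{d-1}$ in $G$ equals $-dc\,a_d\neq 0$ (using characteristic zero once more), so $\deg_{x_n}G=d-1$ and $G\neq 0$; hence $GD\neq 0$ and $\exp(GD)\neq\id$. Since $\exp(GD)=\gamma^{-1}\phi^{-1}\gamma\phi\in\langle\phi\rangle^S$ and, by the inductive hypothesis applied to the nonzero triangular derivation $D$ and $G\in\ker D$, $\exp(GD)$ is normally co-tame, Lemma~\ref{reductionlemma} shows $\phi$ is normally co-tame.

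The one genuinely delicate point is the displayed commutator identity; I expect it to be a short direct computation, the crucial feature being that triangularity of $D$ makes the last variable invisible to $D$—which is exactly why conjugation by $\epsilon_{n,c}$ acts on $FD$ simply by shifting $F$, and why $\ker D$ is preserved by that shift—while everything else is bookkeeping with earlier results.
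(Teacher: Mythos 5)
Your proof is correct and follows essentially the same route as the paper: the commutator with a translation in $x_n$ replaces $F$ by $F-F(x_1,\ldots,x_{n-1},x_n+c)$, dropping $\deg_{x_n}F$ by one (here characteristic zero is used), and you induct down to $F\in\Ik[x_1,\ldots,x_{n-1}]$. The only cosmetic difference is the base case, where you observe $\exp(FD)\in\PA_n(\Ik)$ and invoke Corollary~\ref{cor:parabolic} — whose proof is the very commutator computation with $\epsilon_{n,x_i}$ that the paper carries out directly at this point — so nothing of substance changes.
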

\begin{proof}
Note that since $D$ is triangular, we have $$\epsilon _{n,1}^{-1}\exp(-FD)\epsilon _{n,1}\exp(FD)=\exp\left( (F-(F)\epsilon _{n,1})D \right).$$

If  $\deg _{x_n} F > 0$, then $\deg _{x_n}(F-(F)\epsilon _{n,1})=\deg _{x_n}F - 1$, so inducting downwards on $\deg _{x_n} F$, we are left to deal with the case that $F \in \Ik[x_1,\ldots,x_{n-1}]$.  But in this case, either $\exp(FD)$ is triangular, or there exists $1 \leq i \leq n-1$ with $(x_i)\exp(FD) = x_i+Q$ for some nonzero $Q \in \Ik[x_1,\ldots,x_{n-1}]$.  Then, letting $$\epsilon _{n,x_i}^{-1}\exp(-FD)\epsilon _{n,x_i} \exp(FD)=\epsilon _{n,Q}$$
we see that $\exp(FD)$ is normally co-tame since $\epsilon _{n,Q}$ is elementary.
\end{proof}

\begin{theorem}\label{thm:triangularexponential}
Let $\Ik$ be a field of characteristic zero.
Let $D$ be a nonzero triangular derivation and $F \in \ker D$; let $\tau \in \BA_n(\Ik)$ and $\alpha \in \GL_n(\Ik)$.  Then $\tau \alpha \exp(FD)$ is normally co-tame.
\end{theorem}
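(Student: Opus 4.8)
The plan is to strip one commutator off $\phi:=\tau\alpha\exp(FD)$ and induct on $d:=\deg_{x_n}F$. Write $\beta:=\tau\alpha$ and $\psi:=\exp(FD)$, so $\phi=\beta\psi$. Since $F\in\ker D$ gives $(FD)^k=F^kD^k$, the derivation $FD$ is locally nilpotent and $\psi$ has unit Jacobian, so the hypothesis $\phi\in\SA_n(\Ik)$ already forces $\beta\in\SA_n(\Ik)$; this is the only use of specialness. Consequently each $\gamma_c:=\beta\epsilon_{n,c}\beta^{-1}$ lies in $\SA_n(\Ik)$ (it is conjugate to a translation and $\det J$ is a homomorphism), so the commutators below lie in $\langle\phi\rangle^S$, and, since $\beta\in\SA_n(\Ik)$, $\langle\phi\rangle^S=\langle\beta^{-1}\phi\beta\rangle^S=\langle\psi\beta\rangle^S$. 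We may assume $\phi\neq\id$.

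For $c\in\Ik$ I compute $\gamma_c^{-1}\phi^{-1}\gamma_c\phi=\beta\epsilon_{n,c}^{-1}\beta^{-1}\cdot\psi^{-1}\epsilon_{n,c}\psi$. As $D$ is triangular its coefficients are free of $x_n$, so $D$ commutes with $\epsilon_{n,c}$ and the identity $\epsilon_{n,c}^{-1}\psi^{-1}\epsilon_{n,c}\psi=\exp(F_c'D)$ from the proof of Theorem~\ref{thm:exponential} (with $c$ in place of $1$) yields $\psi^{-1}\epsilon_{n,c}\psi=\epsilon_{n,c}\exp(F_c'D)$, where $F_c':=F-(F)\epsilon_{n,c}\in\ker D$. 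Hence $\gamma_c^{-1}\phi^{-1}\gamma_c\phi=\rho_c\exp(F_c'D)$ with $\rho_c:=\beta\epsilon_{n,c}^{-1}\beta^{-1}\epsilon_{n,c}$. Moreover $\rho_c\in\BA_n(\Ik)$: since $\alpha\in\GL_n(\Ik)$, $\alpha\epsilon_{n,c}^{-1}\alpha^{-1}$ is again a translation, conjugating it by $\tau\in\BA_n(\Ik)$ keeps it in $\BA_n(\Ik)$, and $\rho_c=\bigl(\tau(\alpha\epsilon_{n,c}^{-1}\alpha^{-1})\tau^{-1}\bigr)\epsilon_{n,c}$ is a product of triangular maps; also $\rho_c=(\gamma_c^{-1}\phi^{-1}\gamma_c\phi)\exp(F_c'D)^{-1}\in\SA_n(\Ik)$. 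Thus $\gamma_c^{-1}\phi^{-1}\gamma_c\phi=\rho_c\exp(F_c'D)$ is again of the form treated by the theorem, and $\deg_{x_n}F_c'\leq d-1<d$ when $d\geq1$, while $F_c'=0$ when $d=0$.

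Now I dichotomize. If $\gamma_c^{-1}\phi^{-1}\gamma_c\phi\neq\id$ for some $c$, this element of $\langle\phi\rangle^S$ is normally co-tame --- by the inductive hypothesis if $d\geq1$ (its ``$F$-part'' $F_c'$ has $x_n$-degree $<d$), and by Theorem~\ref{thm:triangular} if $d=0$ (it equals the nontrivial $\rho_c\in\SA_n(\Ik)\cap\BA_n(\Ik)$) --- so $\phi$ is normally co-tame by Lemma~\ref{reductionlemma}. Otherwise $\gamma_c^{-1}\phi^{-1}\gamma_c\phi=\id$ for every $c\in\Ik$; since $\gamma_c^{-1}\phi^{-1}\gamma_c\phi=\beta\bigl(\epsilon_{n,c}^{-1}(\beta^{-1}\phi\beta)^{-1}\epsilon_{n,c}(\beta^{-1}\phi\beta)\bigr)\beta^{-1}$, this says $\epsilon_{n,c}$ commutes with $\beta^{-1}\phi\beta=\psi\beta$ for all $c$, so Lemma~\ref{lem:conjugateparabolic} (with $\alpha=\id$, $k=n$) gives $\lambda(\psi\beta)\lambda^{-1}\in\PA_n(\Ik)$ for some $\lambda\in\SL_n(\Ik)$, a nontrivial parabolic automorphism since $\phi\neq\id$. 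Thus $\psi\beta$ is a $\GL_n(\Ik)$-conjugate of a nontrivial parabolic automorphism, hence normally co-tame by Corollary~\ref{cor:parabolic}, and so $\langle\phi\rangle^S=\langle\psi\beta\rangle^S\supseteq\SLIN_n(\Ik)$.

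The base case $d=0$ needs no separate treatment: the dichotomy closes it, with Theorem~\ref{thm:triangular} in place of the inductive hypothesis. The main difficulty is arranging the commutator to behave at both ends at once --- one must conjugate the $x_n$-translation by the \emph{entire} affine--triangular prefix $\beta=\tau\alpha$, so that pushing it through $\beta$ collapses to a purely triangular factor $\rho_c$, while, because $D$ is $x_n$-free, pushing it through $\psi=\exp(FD)$ strips one $x_n$-degree off $F$ exactly as in Theorem~\ref{thm:exponential}. The sole degenerate possibility, that every such commutator vanishes, is precisely the parabolic situation already isolated by Lemma~\ref{lem:conjugateparabolic}, so nothing new is needed there.
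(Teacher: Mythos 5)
Your proof is correct, but it is organized quite differently from the paper's, and the comparison is instructive. The paper takes three successive commutators with the fixed translation $\epsilon_{n,1}$: first $\phi_0=\epsilon_{n,1}^{-1}\phi^{-1}\epsilon_{n,1}\phi$, then a conjugation by $\exp(FD)$ and a second commutator, arriving at $\phi_2=\exp(HD)$ with $H=G-(G)\epsilon_{n,1}$, and finally invokes Theorem~\ref{thm:exponential}. You instead take a \emph{single} commutator with the pre-conjugated translation $\gamma_c=\beta\epsilon_{n,c}\beta^{-1}$, which collapses in one step to $\rho_c\exp(F_c'D)$ with $\rho_c\in\BA_n(\Ik)\cap\SA_n(\Ik)$ and $\deg_{x_n}F_c'<\deg_{x_n}F$; this keeps the output in the exact shape of the theorem's hypothesis, so you can run a self-contained induction on $\deg_{x_n}F$ without passing through Theorem~\ref{thm:exponential} at all. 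Your approach also buys something concrete: the paper's chain can terminate at $\phi_2=\id$ (already when $F$ is free of $x_n$, one gets $G=H=0$), and the published proof does not address this degeneracy, whereas your dichotomy routes the vanishing-commutator case through Lemma~\ref{lem:conjugateparabolic} and Corollary~\ref{cor:parabolic}, exactly as the paper itself does in Theorems~\ref{thm:bitriangular}--\ref{thm:4triangular} via Corollary~\ref{cor:choosec}. All the supporting details check out: $F_c'=F-(F)\epsilon_{n,c}\in\ker D$ because the coefficients of a triangular $D$ are free of $x_n$; $\rho_c=\bigl(\tau(\alpha\epsilon_{n,c}^{-1}\alpha^{-1})\tau^{-1}\bigr)\epsilon_{n,c}$ is triangular and special; and the identity $\langle\phi\rangle^S=\langle\psi\beta\rangle^S$ is legitimate once you observe $\beta\in\SA_n(\Ik)$. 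The one point worth flagging is cosmetic: as with the paper's own statement, the conclusion requires $\phi\neq\id$ (the identity is not normally co-tame), and you correctly make this assumption explicit at the outset.
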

\begin{proof}
Let $\phi = \tau \alpha \exp(FD)$.  By Lemma \ref{reductionlemma} it suffices to show that $\phi _0 = \epsilon _{n,1} ^{-1} \phi^{-1} \epsilon _{n,1} \phi$ is normally co-tame.  Applying Lemma \ref{reductionlemma} once more, it suffices to show that $\phi _1 = \exp(FD) \phi _0 \exp(-FD)$ is normally co-tame.  So we compute $\phi _1$, letting $\gamma = \alpha ^{-1} \epsilon _1 \alpha \in \Tr_n(\Ik)$:
\begin{align*}
\phi _ 1 &=   \exp(FD) \left( \epsilon _{n,1} ^{-1} \exp(-FD) \alpha ^{-1} \tau ^{-1} \epsilon _{n,1} \tau \alpha \exp(FD) \right) \exp(-FD) \\
&=\exp(FD) \epsilon _{n,1} ^{-1} \exp(-FD) \gamma.
\end{align*}
Now, letting $G=(F)\epsilon _{n,1}^{-1}-F$ as in the proof of Theorem \ref{thm:exponential}, we have
$$\phi _1 = \epsilon _{n,1} ^{-1} \exp(GD) \gamma.$$
But then, applying Lemma \ref{reductionlemma} once more, it suffices to show $\phi _2 = \epsilon _{n,1} \phi _1 \epsilon _{n,1}^{-1} \phi _1 ^{-1}$ is normally co-tame, so we compute 
$$\phi _2  = \exp(GD) \gamma \epsilon _{n,1}^{-1}  \gamma ^{-1} \exp(-GD) \epsilon _{n,1} 
= \exp(GD) \epsilon _{n,1}^{-1} \exp(-GD) \epsilon _{n,1}.$$
But letting $H= G-(G)\epsilon _{n,1}$, we have $\phi _2 = \exp(HD)$ which is normally co-tame by Theorem \ref{thm:exponential}.
\end{proof}

\begin{example}\label{ex:vdE}
The automorphism $$ (x_1,x_2+x_1^3,x_3-x_2(x_1x_3+x_2x_4),x_4+x_1(x_1x_3+x_2x_4)) \in \SA_4(\IC)$$ is normally co-tame by Theorem \ref{thm:triangularexponential}, as it can be written as $(x_1,x_2+x_1^3,x_3,x_4) \exp(FD)$ where $F=x_1x_3+x_2x_4$ and $D= -x_2\frac{\partial}{\partial x_3}+x_1 \frac{\partial}{\partial x_4}$.  This automorphism is conjugate (by a permutation) to van den Essen's counterexample \cite{vandenEssen} to Problem \ref{prob:Meister}. 
\end{example}

\subsubsection*{Acknowledgements}
The author would like to thank the referees for a number of helpful comments, and for pointing out the recent paper \cite{LamyPrzytycki}.

\bibliography{bibliography}
\bibliographystyle{siam}

\end{document}